\let\counterwithin\relax
\theoremstyle{plain} %--default
\newtheorem{theorem}             {Theorem} 
\theoremstyle{definition}
\theoremstyle{plain} %--default
\theoremstyle{remark}
\newtheorem*{definition*}  {Definition}
\newtheorem*{example*}    {Example}
\newtheorem{remark}             {Remark}
\newtheorem*{remark*}            {Remark}
\newtheoremstyle{itplain} % name
    {6pt}                    % Space above
    {5pt\topsep}                    % Space below
    {\itshape}                   % Body font
    {}                           % Indent amount
    {\itshape}                   % Theorem head font
    {.}                          % Punctuation after theorem head
    {5pt plus 1pt minus 1pt}                       % Space after theorem head
\theoremstyle{itplain} %--default
\newtheorem*{lemma*}{Lemma}
\newtheorem*{corollary*} {Corollary} 
\theoremstyle{remark} %--default
\newtheorem*{lemmatest*}{Lemma}
\patchcmd{\section}{\scshape}{\bfseries}{}{}
\renewcommand{\@secnumfont}{\bfseries}
\renewcommand{\Im}{\mathrm{Im}}
\renewcommand{\geq}{\geqslant}
\renewcommand{\leq}{\leqslant}
\numberwithin{equation}{section}
\DeclareMathOperator{\sgn}{sgn}
\DeclareMathOperator{\SL}{SL}
\DeclareMathOperator{\htt}{ht}
\DeclareMathOperator{\ad}{ad}
\def\eps{\varepsilon}
\def\PGL{\operatorname{PGL}}
\newcommand{\qr}[2]{\left( \frac{#1}{#2} \right)}
\DeclareMathOperator{\trace}{trace}
\DeclareMathOperator{\PSL}{PSL}
\DeclareMathOperator{\nr}{nr}
\DeclareMathOperator{\tr}{tr}
\author{Paul D. Nelson}
\address{ETH Z{\"u}rich, Department of Mathematics, R{\"a}mistrasse 101, CH-8092, Z{\"u}rich, Switzerland}
\email{paul.nelson@math.ethz.ch}
\subjclass[2010]{Primary 11F27; Secondary 11F37, 58J51}
\date{\today}
\title[Bounds for twisted symmetric
square $L$-functions]{Bounds for twisted symmetric
  square $L$-functions via half-integral
  weight periods}
\begin{document}

\begin{abstract}
  We establish the first moment bound
  \[
    \sum_{\varphi}
    L(\varphi \otimes \varphi \otimes \Psi, \tfrac{1}{2}) \ll_\eps p^{5/4+\eps}
  \]
  for triple product $L$-functions, where $\Psi$ is a fixed
  Hecke--Maass form on $\SL_2(\mathbb{Z})$ and $\varphi$ runs
  over the Hecke--Maass newforms on $\Gamma_0(p)$ of bounded
  eigenvalue.  The proof is via the theta correspondence and
  analysis of periods of half-integral weight modular forms.
  This estimate is not expected to be optimal, but the exponent
  $5/4$ is the strongest obtained to date for a moment problem
  of this shape.  We show that the expected upper bound follows
  if one assumes the Ramanujan conjecture in both the integral
  and half-integral weight cases.

  Under the triple product formula, our result may be understood
  as a strong level aspect form of quantum ergodicity:
  for a large prime $p$, all but very few Hecke--Maass newforms
  on $\Gamma_0(p) \backslash \mathbb{H}$ of bounded eigenvalue
  have very uniformly distributed mass after pushforward to
  $\SL_2(\mathbb{Z}) \backslash \mathbb{H}$.

  Our main result turns out to be closely
  related to estimates such as
  \[
    \sum_{|n| < p}
    L(\Psi \otimes \chi_{n p},\tfrac{1}{2}) \ll p,
  \]
  where the sum is over $n$ for which $n p$ is a fundamental
  discriminant and $\chi_{n p}$ denotes the corresponding
  quadratic character.  Such estimates improve upon bounds of
  Duke--Iwaniec.
\end{abstract}

\maketitle

\setcounter{tocdepth}{1} \tableofcontents

\maketitle
\section{Introduction}
\label{sec-1}
\subsection{Overview}
\label{sec-1-1}
The quantum ergodicity theorem
\cite{MR0402834,MR819779,MR916129} says that on a compact Riemannian
manifold with ergodic geodesic flow, almost all eigenfunctions
have equidistributed mass in the large eigenvalue limit.  When
the manifold is arithmetic, additional tools become available by
which one can prove quantitative strengthenings of this
conclusion, to the effect that all but \emph{very few}
eigenfunctions (satisfying additional symmetries) have
\emph{very} equidistributed mass (see \S\ref{sec-1-2} below,
or \cite{MR1361757,luo-sarnak-mass, MR3554896}).
A standard way to
quantify such strengthenings is through upper bounds for
$L^2$-mass variance over families.  Le Masson and Sahlsten
\cite{MR3732880} recently introduced a level aspect variant of
the quantum ergodicity theorem concerning almost all
eigenfunctions in a fixed spectral window on a sequence of
hyperbolic surfaces Benjamini--Schramm-converging to the
hyperbolic plane.  The main results of this article may be
understood as quantitative strengthenings of that result,
for specific classes of eigenfunctions and observables, in the
arithmetic congruence case.

Well-developed techniques for analyzing averages of triple
product $L$-values and/or shifted convolution sums apply in our
setting, giving nontrivial estimates in the intended direction.
We do not apply such techniques here.  We instead introduce
techniques involving the theta correspondence and periods of
half-integral weight modular forms, which seem to give stronger
results in our setting.

\subsection{Context}
\label{sec-1-2}
Let $\mathcal{F}$ traverse a sequence of finite families of cusp
forms defined on congruence covers of the modular surface
$\SL_2(\mathbb{Z}) \backslash \mathbb{H}$
(examples will follow shortly).
For each $\varphi \in \mathcal{F}$,
we may define
a probability measure
$\mu_\varphi$
on $\SL_2(\mathbb{Z}) \backslash \mathbb{H}$
by pushforward of $L^2$-mass:
for $\Psi  \in C_c(\SL_2(\mathbb{Z}) \backslash
\mathbb{H})$,
\[
  \mu_\varphi(\Psi) :=
  \frac{\langle \Psi \varphi, \varphi \rangle}{\langle \varphi, \varphi \rangle},
\]
with $\langle , \rangle$ the Petersson inner product.
For the sequences of families considered in this article,
one either knows or can show readily that
the mean of the $\mu_\varphi$ tends to the uniform measure
as $\mathcal{F}$ varies: for fixed $\Psi$,
\[
  \frac{1}{|\mathcal{F}|} \sum_{\varphi \in \mathcal{F}}
  \mu_\varphi(\Psi)
  =
  \frac{\langle \Psi, 1 \rangle}{\langle 1,1 \rangle}
  + o(1).
\]
We consider here the problem of bounding or estimating the variance sums
\begin{equation}\label{eq:variance-defn}
  V_\mathcal{F}(\Psi)
  :=
  \sum_{\varphi \in \mathcal{F}}
  |\mu_\varphi(\Psi)|^2
\end{equation}
for nice enough fixed
$\Psi : \SL_2(\mathbb{Z}) \backslash \mathbb{H} \rightarrow
\mathbb{C}$ of mean zero.
For concreteness and simplicity,
we suppose throughout this article
that $\Psi$ is a fixed even Hecke--Maass cusp form,
noting that some results quoted below apply to more general
observables than this.

The problem of estimating $V_\mathcal{F}(\Psi)$
becomes more difficult the smaller the family $\mathcal{F}$ is
relative to the parameters of its typical elements.
To illustrate, let $1 \leq H \leq T$.
Let $\mathcal{F}([T,T+H])$ denote the set of
normalized cuspidal Hecke--Maass forms on
$\SL_2(\mathbb{Z}) \backslash \mathbb{H}$ of eigenvalue
$1/4 + t^2$ for some $t \in [T, T + H]$.
Here $H$ varies with $T$ as $T \rightarrow \infty$.
One knows that
$\# \mathcal{F}([T,T+H]) \asymp T H$.
The general quantum ergodicity theorem
implies (for the analogous problem on much more general
manifolds) that
\[
  V_{\mathcal{F}([T,T+H])}
  \ll \frac{T^2}{\log T},
\]
but one expects
the much stronger upper bound
\begin{equation}\label{eqn:expected-LH-spectral}
  V_{\mathcal{F}([T,T+H])}(\Psi)
  \ll_\eps H T^{\eps},
\end{equation}
which should moreover be essentially sharp (i.e., up to the factor
$T^\eps$).
This expectation
is a consequence of the Lindel{\"o}f hypothesis
combined with the triple product formula in the form
\begin{equation}
  |\mu_\varphi(\Psi)|^2
  = T^{-1+o(1)}
  L(\varphi \otimes \varphi \otimes \Psi,\tfrac{1}{2})
\end{equation}
for $\varphi \in \mathcal{F}([T,T+H])$,
where $o(1)$ denotes a quantity tending to zero with $T$.
In the ``long family''
case $H = T$, an asymptotic formula for
$V_{\mathcal{F}([T,2 T])}(\Psi)$ (confirming
a more precise version of \eqref{eqn:expected-LH-spectral})
follows from work of P. Zhao \cite{MR2651907}.
Jung \cite{MR3554896} has confirmed the expectation \eqref{eqn:expected-LH-spectral} for
$H \geq T^{1/3+\eps}$, which appears to be the limit of
current technology.
The upper bound
\begin{equation}\label{eqn:jung-1/3-T-T-1}
  V_{\mathcal{F}([T,T+1])}(\Psi)
  \ll_\eps T^{1/3+\eps}
\end{equation}
obtained from the case $H = T^{1/3+\eps'}$
of \eqref{eqn:expected-LH-spectral}
by positivity likewise appears to be the best
to hope for in the near future.  
Results concerning holomorphic forms entirely analogous to
those quoted above had been obtained earlier in a series of
papers by Luo--Sarnak \cite{MR1361757,luo-sarnak-mass,MR2103474}.

\subsection{Main result}
We pursue here
level aspect
analogues
of the estimate \eqref{eqn:jung-1/3-T-T-1}:
instead of working with increasing eigenvalues
on a fixed surface,
we consider bounded eigenvalues on a tower of congruence covers
of the modular surface.
To that end, fix $\Lambda > 1/4$.
Let $p$ denote a large prime, regarded
as tending to $\infty$.
Let $\mathcal{F}(p)$ denote the set of normalized Hecke--Maass cuspidal newforms $\varphi$ on
$\Gamma_0(p)$ whose Laplace eigenvalue is at most $\Lambda$.
One knows that $\# \mathcal{F}(p) \asymp p$
(see \cite[\S15.5]{MR2061214}),
so the trivial estimate
is
$V_{\mathcal{F}(p)}(\Psi) \ll p$.
The triple product formula, as specialized
in \cite[\S4]{PDN-HQUE-LEVEL},
gives the identity
\begin{equation}\label{eq:ichino-specialized}
  |\mu_\varphi(\Psi)|^2
  =
  p^{-1+o(1)}
  L(\varphi \otimes \varphi \otimes \Psi,\tfrac{1}{2}),
\end{equation}
where $o(1)$ denotes a quantity tending to zero with $p$.
The Lindel{\"o}f hypothesis
thus
suggests
that
\begin{equation}\label{eq:lind-individual}
\mu_{\varphi}(\Psi) \ll p^{-1/2 + o(1)}
\end{equation}
for individual
$\varphi \in \mathcal{F}(p)$,
hence that
\begin{equation}\label{eq:lind-conj-level}
  V_{\mathcal{F}(p)}(\Psi) \ll_\eps p^\eps.
\end{equation}
The nontrivial estimate
$V_{\mathcal{F}(p)}(\Psi) \ll p / \log (p)$ likely follows,
in a more general setting,
from the methods of \cite{MR3732880}
(generalized to non-compact quotients,
and using \cite{MR3742473} to
verify the hypothesis of Benjamini--Schramm convergence).
We establish the following further strengthening:
\begin{theorem}\label{thm:ad}
  Fix an even Hecke--Maass cusp form $\Psi$
  on $\SL_2(\mathbb{Z})$ and $\Lambda > 1/4$.
  Let $\mathcal{F}(p)$ be as above,
  and $V_{\mathcal{F}(p)}$ as in \eqref{eq:variance-defn}.
  Then
  \begin{equation}\label{eq:ad}
    V_{\mathcal{F}(p)}(\Psi)
    \ll_\eps p^{1/4+\eps}.
  \end{equation}
\end{theorem}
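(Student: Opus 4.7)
The plan is as follows. By the specialized triple product formula \eqref{eq:ichino-specialized}, Theorem~\ref{thm:ad} reduces to the moment bound
\[
\sum_{\varphi \in \mathcal{F}(p)} L(\varphi \otimes \varphi \otimes \Psi, \tfrac{1}{2}) \ll_\eps p^{5/4+\eps}.
\]
Factoring via Rankin--Selberg gives $L(\varphi \otimes \varphi \otimes \Psi, s) = L(\Psi,s)\, L(\Sym^2 \varphi \otimes \Psi, s)$; since $L(\Psi,1/2) = O(1)$, it suffices to bound $\sum_{\varphi \in \mathcal{F}(p)} L(\Sym^2 \varphi \otimes \Psi, 1/2)$ by $p^{5/4+\eps}$.

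Next, I would realize the twisted symmetric square central value as the squared magnitude of a period of half-integral weight modular forms. Via the Shimura--Waldspurger theta correspondence for $(\widetilde{\SL}_2, \PGL_2)$, each $\varphi \in \mathcal{F}(p)$ admits a metaplectic lift $\tilde\varphi$ of weight $1/2$; invoking an appropriate seesaw identity, with $\Psi$ entering through its own half-integral weight lift $\tilde\Psi$ and a theta kernel $\theta$, I expect an identity of the shape
\[
L(\Sym^2 \varphi \otimes \Psi, \tfrac{1}{2}) \;\asymp\; \bigl|\langle \tilde\varphi,\, \tilde\Psi \cdot \theta\rangle\bigr|^2 \cdot (\text{explicit local factors}).
\]
The moment problem thereby becomes one about periods of half-integral weight forms. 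Summing over $\varphi$ and applying Parseval in the half-integral weight spectral decomposition, the sum is controlled by $\|\tilde\Psi \cdot \theta\|_2^2$ (up to those local factors).

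Expanding this $L^2$-norm via the Fourier coefficients of $\tilde\Psi$ and $\theta$, and invoking Waldspurger's formula to identify squared Fourier coefficients at a fundamental discriminant $d$ with $L(\Psi \otimes \chi_d, 1/2)$ (up to constants), the problem then reduces to the quadratic-twist moment
\[
\sum_{|n| < p} L(\Psi \otimes \chi_{np}, \tfrac{1}{2}) \ll p,
\]
flagged in the abstract as an improvement on Duke--Iwaniec.

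The main obstacle is twofold. First, one must construct the seesaw identity with correct normalizations at every place---especially at the ramified prime $p$, where newvector theory and the local theta correspondence must both be handled carefully---so that the period identification holds cleanly. Second, and more substantively, establishing the quadratic-twist moment bound above is what forces the exponent $5/4$ rather than the conjecturally optimal $1$: available subconvex input for that moment yields a bound consistent only with a $p^{1/4}$ loss, while the Ramanujan conjecture in both the integral and half-integral weight aspects would close the gap, as remarked in the abstract.
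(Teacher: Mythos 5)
Your high-level outline is broadly aligned with the paper's strategy---triple product formula, theta correspondence, half-integral weight periods, subconvexity for quadratic twists---but the central mechanism you propose is structurally different from what the paper does, and that difference is fatal.

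The paper's seesaw identity, \eqref{eq:period-formula}, reads $\mu_\varphi(\Psi) = \pm 4 \langle \varphi(\tfrac{4z}{p}) \theta(z), h^\sharp(z)\rangle$, where the varying form $\varphi$ \emph{remains in integral weight} (only dilated), and only the \emph{fixed} form $\Psi$ is lifted to weight $1/2$ (via the Shintani lift $h$, then via the incomplete Hecke operator yielding $h^\sharp$). The crucial point is that the right-hand side is linear in $\varphi$ itself, so Bessel's inequality applies directly using the orthogonality and Hoffstein--Lockhart normalization \eqref{eq:HL} of the $\varphi$'s. You instead propose to lift each $\varphi$ to a metaplectic form $\tilde\varphi$ and work with $\bigl|\langle\tilde\varphi,\tilde\Psi\theta\rangle\bigr|^2$. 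That runs into two problems. First, weight: $\tilde\varphi$ has weight $1/2$ while $\tilde\Psi\theta$ has weight $1$, so the pairing is ill-defined; presumably you intended $\langle\tilde\varphi,\Psi\theta\rangle$ (a Biro-type period, which the paper cites as related but distinct). Second, and more seriously, $\tilde\varphi$ vanishes whenever $L(\varphi,\tfrac12)=0$ (a positive proportion of $\varphi$ by root-number considerations), yet $L(\Sym^2\varphi\otimes\Psi,\tfrac12)$ does not factor through $L(\varphi,\tfrac12)$. So either your proposed identity is simply false, or a compensating $L(\varphi,\tfrac12)^{-1}$ factor must lurk in the ``explicit local factors,'' which wrecks the Parseval step. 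The paper's Shimizu-based seesaw (\S\ref{sec-3-5}, \S\ref{sec-5}) avoids both issues by never passing $\varphi$ through the Shimura correspondence.

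Two further gaps. You do not mention the identity $\langle h^\sharp, h^\sharp\rangle=\langle h,h\rangle$ from \S\ref{sec-4}, which is a key ingredient: it exploits cancellation from the half-integral weight multiplier, and without it one loses a factor $p^{\vartheta}$. And the claim that Theorem \ref{thm:ad} ``reduces to'' the quadratic-twist moment $\sum_{|n|<p} L(\Psi\otimes\chi_{np},\tfrac12)\ll p$ is off: the paper reduces to the basic inequality \eqref{eq:flexible-bound-theta-h-sharp}, and then Theorem \ref{thm:ad} is proved by taking $T=p^{1/2}$ and applying the \emph{termwise} Conrey--Iwaniec bound \eqref{eq:bound-for-b-of-n}; the averaged moment $\sum_{|n|<p} L(\Psi\otimes\chi_{np},\tfrac12)\ll p$ is a \emph{byproduct} (Theorem \ref{thm:quad}), obtained by a separate application of $\langle h^\sharp,h^\sharp\rangle=\langle h,h\rangle$, not the input to the main theorem.
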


By Chebyshev's inequality, we deduce
the following approximation to \eqref{eq:lind-individual}:
\begin{theorem}\label{cor:qqel}
  Fix positive reals $\alpha,\beta$ for which $2 \alpha + \beta < 3/4$.
  Then
  \[
  \# \{\varphi  \in \mathcal{F}(p):
  |\mu_\varphi(\Psi)| > p^{-\alpha}
  \}
  \ll p^{-\beta} \# \mathcal{F}(p).
  \]
\end{theorem}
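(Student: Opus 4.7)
The plan is to deduce Theorem \ref{cor:qqel} from Theorem \ref{thm:ad} by a standard Chebyshev (Markov) argument. Write $N_\alpha := \#\{\varphi \in \mathcal{F}(p) : |\mu_\varphi(\Psi)| > p^{-\alpha}\}$ for the quantity to be bounded. Since every $\varphi$ counted in $N_\alpha$ contributes at least $p^{-2\alpha}$ to the sum defining $V_{\mathcal{F}(p)}(\Psi)$, we obtain immediately
\[
N_\alpha \cdot p^{-2\alpha} \leq V_{\mathcal{F}(p)}(\Psi).
\]

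Applying Theorem \ref{thm:ad} to bound the right-hand side then gives $N_\alpha \ll_\eps p^{1/4 + 2\alpha + \eps}$. Using $\#\mathcal{F}(p) \asymp p$, the desired conclusion $N_\alpha \ll p^{-\beta} \#\mathcal{F}(p)$ is equivalent to $N_\alpha \ll p^{1 - \beta}$, so it suffices to verify the exponent inequality $1/4 + 2\alpha + \eps \leq 1 - \beta$, i.e., $2\alpha + \beta \leq 3/4 - \eps$. Under the hypothesis $2\alpha + \beta < 3/4$, one simply chooses $\eps$ small enough (depending on the fixed $\alpha,\beta$) to ensure this inequality, and the result follows.

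There is essentially no obstacle here: the argument is a one-line deduction once Theorem \ref{thm:ad} is in hand. The only minor point to note is that the implicit constants depend on $\Lambda$, $\Psi$, $\alpha$, $\beta$ through the constant in \eqref{eq:ad} and the choice of admissible $\eps$; the dependence is harmless since all these parameters are fixed.
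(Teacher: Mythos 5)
Your proof is correct and is exactly the Chebyshev/Markov argument the paper invokes when it states that Theorem~\ref{cor:qqel} follows from Theorem~\ref{thm:ad} ``by Chebyshev's inequality.'' The paper leaves this deduction to the reader, and your write-up fills in precisely those details, including the necessary choice of $\eps$ small enough to absorb the strict inequality $2\alpha + \beta < 3/4$.
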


\subsection{Conditional sharp bounds}
While the Lindel{\"o}f-consistent conjecture
\eqref{eq:lind-conj-level}
appears to be out of reach,
we give a conditional
proof which appears to be the first of its kind.
Here and henceforth let $\chi_d$ denote the quadratic
Dirichlet character attached to a fundamental discriminant $d$.
\begin{theorem}\label{thm:conditional-lind-implies-lind}
  Assume that
  \begin{enumerate}
  \item the Lindel{\"o}f hypothesis
    $L(\Psi \otimes \chi_d, \tfrac{1}{2}) \ll_\eps d^{\eps}$
    holds for the family of quadratic twists of $\Psi$, and
    that
  \item the Ramanujan conjecture
    holds
    for the Hecke eigenvalues of $\Psi$.
  \end{enumerate}
  Then \eqref{eq:lind-conj-level} holds.
\end{theorem}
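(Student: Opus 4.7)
The plan is to revisit the reduction that underlies the proof of Theorem~\ref{thm:ad} and feed in both hypotheses at the final stage. By the triple product identity \eqref{eq:ichino-specialized}, it suffices to prove
\[
  \sum_{\varphi \in \mathcal{F}(p)} L(\varphi \otimes \varphi \otimes \Psi, \tfrac{1}{2})
  \ll_\eps p^{1+\eps}.
\]
The theta-correspondence machinery used to prove Theorem~\ref{thm:ad} expresses this moment, via periods of half-integral weight lifts $\widetilde{\varphi}$ of the $\varphi$ and a Waldspurger/Kohnen--Zagier formula, as a weighted sum of quadratic twist central values of $\Psi$, roughly of the shape
\[
  \sum_{\varphi \in \mathcal{F}(p)}
  L(\varphi \otimes \varphi \otimes \Psi, \tfrac{1}{2})
  \;\approx\;
  \sum_{\substack{|n| \ll p^{1+o(1)} \\ np \text{ fund.\ disc.}}}
  w_\Psi(n)\, L(\Psi \otimes \chi_{np}, \tfrac{1}{2}),
\]
where the arithmetic weights $w_\Psi(n)$ are short convolutions of Hecke eigenvalues of $\Psi$ at primes dividing $n$ (plus contributions from the archimedean test function, which are bounded in terms of the fixed $\Psi$ and $\Lambda$).

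Next, I would apply the two hypotheses directly. Hypothesis~(1) gives $L(\Psi \otimes \chi_{np}, \tfrac{1}{2}) \ll_\eps (np)^\eps \ll p^\eps$ uniformly for the $n$ in range. Hypothesis~(2) gives $w_\Psi(n) \ll_\eps |n|^\eps \ll p^\eps$ for each $n$. Since the number of admissible $n$ is $O(p)$, the resulting bound is $O(p^{1+\eps})$, which after multiplication by the $p^{-1+o(1)}$ factor in \eqref{eq:ichino-specialized} yields $V_{\mathcal{F}(p)}(\Psi) \ll_\eps p^\eps$, as claimed.

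The main obstacle is not the summation step, which is essentially a single line once the reduction is in hand, but rather to establish the reduction in a form where the weights $w_\Psi(n)$ depend on $\Psi$ only through its Hecke eigenvalues—so that Ramanujan for $\Psi$ alone suffices—and not through heavier invariants (such as $L^\infty$-norms of half-integral weight lifts or shifted moments) that would demand input beyond assumptions~(1)--(2). One must also check that the discriminants appearing in the sum truncate to $|np| \ll p^{2+o(1)}$ so that hypothesis~(1) applies directly, and that diagonal/boundary contributions from the Petersson expansion on the metaplectic side are of the same or smaller order. All of these should follow from a careful inspection of the theta-correspondence and half-integral weight period machinery already developed for Theorem~\ref{thm:ad}; the conditional argument then amounts to replacing, at the last step, the unconditional subconvex/moment bounds for the quadratic twist family by the conjectural Lindelöf input.
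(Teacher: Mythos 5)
Your proposal is conceptually aligned with the paper's argument: the two hypotheses enter exactly as you say, with Lindel{\"o}f for quadratic twists controlling $|b(d)|^2 = c\,L(\Psi\otimes\chi_d,\tfrac12)$ for fundamental $d$, and Ramanujan for $\Psi$ (via the Shimura relation $|b(d\delta^2)|\ll_\eps |b(d)|\delta^{\vartheta+\eps}$) controlling the non-fundamental coefficients, so that together they give $|b(n)|^2\ll_\eps |n|^\eps$, which is precisely \eqref{eq:bound-for-b-of-n-conditional}.

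Where your write-up stays at a higher level is the reduction itself, which you flag as the ``main obstacle.'' The paper does not express the moment as an (approximate) identity against quadratic-twist $L$-values; instead it packages the entire reduction as the chain of inequalities \eqref{eq:post-bessel} (Bessel) and \eqref{eq:flexible-bound-theta-h-sharp}: $\|\theta h^\sharp\|^2 \ll T^{1/2} + p^{-1/2}R$, with $R = \sum_n |b(pn)|^2 |n|^{-1/2}(1+|n|T/p)^{-100}$. The parameter $T$ is free, and the difference between the unconditional Theorem~\ref{thm:ad} (take $T=p^{1/2}$ and use Conrey--Iwaniec) and the conditional Theorem~\ref{thm:conditional-lind-implies-lind} (take $T=1$ and use $|b(n)|^2\ll_\eps|n|^\eps$) is only this final choice; with $T=1$ one has $R\ll_\eps p^{1/2+\eps}$ and hence $\|\theta h^\sharp\|^2\ll_\eps p^\eps$, which via \eqref{eq:post-bessel} yields \eqref{eq:lind-conj-level}. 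So the machinery you defer to is already in place, and the conditional proof is genuinely a one-line specialization of it; you would do well to make the role of the parameter $T$ explicit, and to note that the ``weighted sum over $n$'' in your sketch appears not as an identity but through Bessel's inequality and the pointwise bound \eqref{eq:theta-height-1-4}, which also accounts for the extra factor $|n|^{-1/2}$ in $R$ that your cardinality count $O(p)$ does not capture.
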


\subsection{Application to moments of triple product
  $L$-functions}
Under \eqref{eq:ichino-specialized},
Theorems \ref{thm:ad}
and
\ref{thm:conditional-lind-implies-lind} 
translate to moment bounds for the (nonnegative) central values
of some triple product $L$-functions:
\begin{theorem}
  Unconditionally,
  \begin{equation}\label{eq:triple-product-moment-est}
    \sum_{\varphi \in \mathcal{F}(p)}
    L(\varphi \otimes \varphi \otimes \Psi,\tfrac{1}{2})
    \ll p^{5/4+o(1)}.
  \end{equation}
  Under the assumptions of Theorem
  \ref{thm:conditional-lind-implies-lind},
  \begin{equation}\label{eq:lindelof-bound-for-triple-product-open}
    \sum_{\varphi \in \mathcal{F}(p)} L(\varphi \otimes \varphi
    \otimes \Psi,\tfrac{1}{2}) \ll p^{1+o(1)}.
  \end{equation}
\end{theorem}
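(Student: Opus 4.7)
The plan is to deduce both bounds directly from Theorems \ref{thm:ad} and \ref{thm:conditional-lind-implies-lind} by inverting the triple product formula \eqref{eq:ichino-specialized}. Rearranging that identity gives
\[
  L(\varphi \otimes \varphi \otimes \Psi, \tfrac{1}{2})
  = p^{1+o(1)} \, |\mu_\varphi(\Psi)|^2
\]
for each individual $\varphi \in \mathcal{F}(p)$. Summing over $\mathcal{F}(p)$ and recalling the definition \eqref{eq:variance-defn} of $V_{\mathcal{F}(p)}(\Psi)$ then yields
\[
  \sum_{\varphi \in \mathcal{F}(p)}
  L(\varphi \otimes \varphi \otimes \Psi, \tfrac{1}{2})
  = p^{1+o(1)} \, V_{\mathcal{F}(p)}(\Psi).
\]

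Feeding the unconditional bound \eqref{eq:ad} into this relation produces \eqref{eq:triple-product-moment-est}, while feeding in the conditional bound \eqref{eq:lind-conj-level} supplied by Theorem \ref{thm:conditional-lind-implies-lind} produces \eqref{eq:lindelof-bound-for-triple-product-open}. No further analytic input is required; all of the substantive work has already been packaged into the two variance estimates.

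The one detail I would verify carefully is the uniformity of the $o(1)$ in \eqref{eq:ichino-specialized} as $\varphi$ varies over $\mathcal{F}(p)$. Since the Laplace eigenvalue is capped by the fixed constant $\Lambda$, the archimedean local factors entering Ichino's formula are controlled in terms of $\Lambda$ alone, and the local factors at $p$ are uniformly under control for newforms of exact level $p$ (the relevant specialization being the one recorded in \S4 of \cite{PDN-HQUE-LEVEL}). Granted this uniformity, the summation step is routine, and the only genuine obstacle---a new estimate for the variance sum---has already been overcome in Theorems \ref{thm:ad} and \ref{thm:conditional-lind-implies-lind}.
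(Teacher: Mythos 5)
Your proof is correct and follows exactly the route the paper intends: the paper states this theorem as an immediate translation of Theorems \ref{thm:ad} and \ref{thm:conditional-lind-implies-lind} through \eqref{eq:ichino-specialized}, which is precisely what you do. Your remark about verifying the uniformity of the $o(1)$ in $\varphi$ (bounded eigenvalue, exact level $p$) is the right thing to check and is indeed the content of the specialization in \cite[\S4]{PDN-HQUE-LEVEL} that the paper cites.
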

By comparison, Jung's estimate \eqref{eqn:jung-1/3-T-T-1} translates to
\begin{equation}\label{eq:jung-L-fn}
  \sum_{\varphi \in \mathcal{F}([T,T+1])}
  L(\varphi \otimes \varphi \otimes \Psi,\tfrac{1}{2})
  \ll_\eps T^{4/3+\eps}.
\end{equation}
The first moments on the LHS of \eqref{eq:triple-product-moment-est} and \eqref{eq:jung-L-fn} and
are
analogous in that in each, the analytic
conductor ($\asymp p^4$ and $\asymp T^4$)
is roughly the fourth power of the family cardinality
($\asymp p$ and $\asymp T$).
We note  also that
Iwaniec--Michel \cite{MR1833252}
established (the analogue for holomorphic forms of) the estimate
\begin{equation}\label{eq:}
  \sum_{\varphi \in \mathcal{F}(p)}
  L(\varphi \otimes \varphi,\tfrac{1}{2} )^2
  \ll p^{1 + o(1)},
\end{equation}
which may be understood as a variant of
\eqref{eq:lindelof-bound-for-triple-product-open} in which
$\Psi$ is an Eisenstein series.

\begin{remark}
  A general ``rule of thumb'' in the literature on moment bounds
  for families of $L$-functions is that one should be able to
  establish Lindel{\"o}f-consistent bounds when the family size
  is at least the fourth root of the analytic conductor.  This
  rule does not
  seem to apply when one considers ``sparse''
  moments, such as \eqref{eq:triple-product-moment-est};
  one may understand
  ``sparsity'' here as coming from the coincidence of two of the
  three factors in the triple product $L$-parameter coincide.
  By comparison, it is not difficult to prove
  that
  \[
  \sum_{\varphi \in \mathcal{F}(p)}
  L(\varphi \otimes \Psi_1 \otimes \Psi_2, \tfrac{1}{2})
  \ll p^{1+o(1)}
  \]
  for fixed $\Psi_1, \Psi_2$
  (see \cite{MR2740724} for exact formulas of a similar spirit).  
\end{remark}

\begin{remark}
  We were unable to obtain a ``classical'' proof of
  \eqref{eq:triple-product-moment-est} using the approximate
  functional equation and familiar transformations thereafter.
\end{remark}

\subsection{Application to sparse moments of quadratic twists}
The following curious bound is a byproduct of our method:
\begin{theorem}\label{thm:quad}
  For $C \geq 1$,
  one has
  \begin{equation}
    \sum_{n : |n| \leq C p}
    L(\Psi \otimes \chi_{p n}, \tfrac{1}{2})
    \left( 1 +
      \log (\frac{C p}{|n|})
    \right)
    \ll C p
  \end{equation}
  uniformly in $C,p$,
  where the sum is over integers $n$ for which
  $p n$ is a fundamental discriminant.
  In particular,
  \begin{equation}\label{eq:quad-simple}
    \sum_{n: |n| < p}
    L(\Psi \otimes \chi_{p n}, \tfrac{1}{2})
    \ll p.
  \end{equation}
\end{theorem}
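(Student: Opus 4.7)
The plan is to prove Theorem \ref{thm:quad} by combining a Waldspurger-type formula, which encodes the twisted central $L$-values as squares of Fourier coefficients of a half-integral weight modular form, with a Rankin--Selberg-type period bound for such forms.

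First, I would invoke a Maass-form version of the Waldspurger formula (e.g.\ that of Baruch--Mao) to produce a half-integral weight Shimura preimage $\tilde{\Psi}$ of $\Psi$, on a metaplectic cover of level divisible by $4p$, whose Fourier coefficients $a(m)$ at positive fundamental discriminants $m = pn$ (with $n$ in a specified residue class modulo $4p$) satisfy
$$
L(\Psi \otimes \chi_{pn}, \tfrac{1}{2}) \;=\; c_{\infty}\, c_p\, \sqrt{|pn|}\, |a(pn)|^2,
$$
with local constants $c_{\infty}, c_p$ independent of $n$; a companion form handles the negative discriminants. Up to these uniform-in-$n$ factors, Theorem \ref{thm:quad} reduces to a bound of the shape
$$
\sum_{|n| \leq Cp,\, pn \text{ f.d.}} \sqrt{|n|}\, |a(pn)|^2 \bigl(1 + \log(Cp/|n|)\bigr) \;\ll\; C \sqrt{p}.
$$

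Second, I would realize this weighted sum as a period integral of $|\tilde{\Psi}|^2$ against a test function on $\Gamma_0(4p) \backslash \mathbb{H}$. The natural choice is an incomplete Eisenstein series
$$
E_\phi(z) \;=\; \sum_{\gamma \in \Gamma_\infty \backslash \Gamma_0(4p)} \phi(\Im \gamma z),
$$
with $\phi$ a smooth function roughly of the form $y^{1/2}$ cut off at the scale $y \asymp 1/(Cp^2)$ matching the Fourier-coefficient range $|m| = |pn| \leq Cp^2$. Unfolding,
$$
\int_{\Gamma_0(4p) \backslash \mathbb{H}} |\tilde{\Psi}(z)|^2\, E_\phi(z)\, d\mu(z) \;=\; \sum_m |a(m)|^2 \int_0^\infty W(4\pi|m|y)^2\, \phi(y)\, \frac{dy}{y^{3/2}},
$$
and the inner integral, computed using the small-argument asymptotics of the half-integral weight Whittaker function $W(u)^2 \sim u$, can be arranged to reproduce the target weight $\sqrt{|m|}\bigl(1 + \log(Cp^2/|m|)\bigr)\mathbf{1}_{|m| \leq Cp^2}$ up to bounded factors.

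Finally, I would bound the period integral by $\|\tilde{\Psi}\|_2^2 \cdot \sup_z E_\phi(z)$. In a suitable normalization (say Baruch--Mao), $\|\tilde{\Psi}\|_2$ is uniformly bounded in $p$; and a direct count of $\Gamma_\infty \backslash \Gamma_0(4p)$-orbits reaching the support of $\phi$ yields $\sup_z E_\phi(z) \ll C\sqrt{p}$, delivering the claimed bound after combining with the earlier factor of $\sqrt{p}$ from Waldspurger. The main obstacle is the precise matching in the second step: producing exactly the weight $1 + \log(Cp/|n|)$ with its logarithmic divergence near $n = 0$ and its sharp cutoff at $|n| = Cp$ requires a careful choice of $\phi$, and both the uniformity in $p$ of the local Waldspurger constant $c_p$ and the encoding of the fundamental-discriminant condition into the newform $\tilde{\Psi}$ require substantial local and newform-theoretic bookkeeping. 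The negative-$n$ contribution is handled by a parallel computation with a companion half-integral weight form.
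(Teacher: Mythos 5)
Your overall architecture (reduce to Fourier coefficients of a half-integral-weight lift via Waldspurger, realize the weighted coefficient sum as a period of $|h^\sharp|^2$ against a cutoff near the cusp, and unfold) is essentially the route the paper takes, though the paper works with a lower bound for $\|h^\sharp\|^2$ rather than an upper bound for a period against an incomplete Eisenstein series. However, there is a genuine and fatal gap in your argument: the assertion that ``in a suitable normalization, $\|\tilde{\Psi}\|_2$ is uniformly bounded in $p$.'' This is not a matter of normalization. Your $\tilde{\Psi}$ is, up to normalization, the form $h^\sharp = p^{-1/2}\sum_{j \bmod p} h(\tfrac{z+pj}{p^2})$ obtained by applying a (normalized) $U_{p^2}$-type operator to the fixed level-$4$ Shimura preimage $h$ of $\Psi$, and the statement $\langle h^\sharp, h^\sharp\rangle \ll 1$ is precisely the identity \eqref{eqn:l-2-bound-for-h-sharp}, $\langle h^\sharp, h^\sharp\rangle = \langle h, h\rangle$, which the paper proves in \S\ref{sec-4} by exhibiting exact cancellation of the off-diagonal cross terms via the quadratic-character oscillation in the theta multiplier $\eps(\gamma)$. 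The paper's remark after that proof makes the point explicitly: the ``trivial bound'' that ignores this cancellation is only $O(p^{\vartheta})$ with $\vartheta$ the Ramanujan exponent, which would yield $\sum_{|n|<p} L(\Psi\otimes\chi_{pn},\tfrac12) \ll p^{1+\vartheta}$, strictly weaker than the theorem. So the quantity you treat as definitionally bounded is in fact the heart of the matter and requires an argument specific to half-integral weight; no Waldspurger or Baruch--Mao normalization convention delivers it.

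Two secondary remarks. First, your intended weight-matching does not quite come out: with $\phi(y)\asymp y^{1/2}\mathbf{1}_{y\geq y_0}$ and the small-argument asymptotic $W(u)^2\sim u$, the $m$-th unfolded integral, after the substitution $u=4\pi|m|y$, becomes $|m|^{1/2}\int_{\gg |m|y_0}W(u)^2\,du/u^{3/2}$, and since $W(u)^2/u^{3/2}\sim u^{-1/2}$ is integrable at $0$, this gives a bounded factor rather than the needed $\log$. The paper avoids this by instead using the function $V_0(u)=\int_u^\infty |W(y)|^2\,dy/y^2$, which does satisfy $V_0(u)\asymp\log(1/u)$ as $u\to 0^+$. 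Second, $\sup_z E_\phi(z)$ is not finite for your choice of $\phi$ (it grows like $(\Im z)^{1/2}$ up the cusp of the trivial coset), so the Cauchy--Schwarz step $\int E_\phi|\tilde\Psi|^2\leq (\sup E_\phi)\|\tilde\Psi\|_2^2$ is not directly available; the paper instead argues from the fundamental-domain geometry (the fibers of $\{x+iy:0\leq x\leq p,\ y\geq y_0\}\to\Gamma_0(4/p)\backslash\mathbb{H}$ have cardinality $O(1/y_0)$), which achieves the same effect cleanly. Both of these can be repaired, but the missing $L^2$-identity for $h^\sharp$ cannot be waved away.
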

In fact, the methods of this paper
reveal a surprising
relationship between the moments
$\sum_n L(\Psi \otimes \chi_{p n}, \tfrac{1}{2})$ (possibly over
shorter intervals than those above) and
$\sum_{\varphi \in \mathcal{F}(p)} L(\varphi \otimes \varphi
\otimes \Psi,\tfrac{1}{2})$.
This relationship demonstrates the difficulty
underlying an unconditional proof of
\eqref{eq:lind-conj-level}.
We refer to
\S\ref{sec-2} for a detailed discussion of this
relationship,
but record here one consequence:
\begin{theorem}\label{thm:converse-estimate}
  Assume
  \eqref{eq:lind-conj-level}, or equivalently,
  \eqref{eq:lindelof-bound-for-triple-product-open}.
  Assume also that
  $L(\Psi,\tfrac{1}{2}) \neq 0$.
  Then
  \begin{equation}\label{eqn:sparse-quadratic-twist-sum-divided-by-rt-n}
    \sum_{n:|n| < p} \frac{L(\Psi \otimes \chi_{p n}, \tfrac{1}{2})}{|n|^{1/2}} \ll
    p^{1/2 + o(1)}.
  \end{equation}
\end{theorem}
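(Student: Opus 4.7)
The plan is to reverse the chain of reductions discussed in Section \ref{sec-2}: the same theta-correspondence machinery that expresses the triple-product moment \eqref{eq:lindelof-bound-for-triple-product-open} as a sum of squared Fourier coefficients of a half-integral weight form $\widetilde{\Psi}$ attached to $\Psi$ also expresses the sparse quadratic-twist moment \eqref{eqn:sparse-quadratic-twist-sum-divided-by-rt-n} directly, without spectral averaging. The hypothesis $L(\Psi, \tfrac{1}{2}) \neq 0$ enters precisely so that Waldspurger-type normalizations, which place $L(\Psi, \tfrac{1}{2})$ in the denominator, may be inverted.

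First, I apply the Waldspurger/Katok--Sarnak formula to each summand, rewriting
\[
L(\Psi \otimes \chi_{p n}, \tfrac{1}{2}) \asymp_{\Psi} \frac{|a(p n)|^2}{(p |n|)^{1/2} \, L(\Psi, \tfrac{1}{2})},
\]
where $\{a(m)\}$ are the Fourier coefficients of a half-integral weight Maass form $\widetilde{\Psi}$ in the Shimura correspondence with $\Psi$, suitably normalized at each relevant place. Substituting into the left-hand side of \eqref{eqn:sparse-quadratic-twist-sum-divided-by-rt-n}, the theorem reduces to proving
\[
S := \sum_{|n| < p} \frac{|a(p n)|^2}{|n|} \ll_{\Psi, \varepsilon} p^{1 + \varepsilon}.
\]

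Next, I would interpret $S$, up to factors of size $p^{o(1)}$, as a Rankin--Selberg-type Petersson pairing of $|\widetilde{\Psi}|^2$ against a half-integral-weight Eisenstein series whose Fourier coefficients isolate frequencies divisible by $p$; equivalently, as a truncation of $|\widetilde{\Psi}|^2$ to the horocyclic region at height $\asymp 1/p$. A spectral decomposition of this pairing on the metaplectic cover, combined with the same theta lift between half-integral weight forms and $\mathcal{F}(p)$ that underlies Theorem \ref{thm:ad}, would express the cuspidal contribution as a positive multiple of $\sum_{\varphi \in \mathcal{F}(p)} L(\varphi \otimes \varphi \otimes \Psi, \tfrac{1}{2})$, which is $\ll p^{1 + o(1)}$ by hypothesis. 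The continuous and residual contributions would be controlled separately using the polynomial growth of $L(\Psi, s)$ near the line $\Re s = 1$ and the nonvanishing of $L(\Psi, \tfrac{1}{2})$.

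The main obstacle is pinning down the \emph{direction} of this comparison. The hypothesis \eqref{eq:lindelof-bound-for-triple-product-open} is an upper bound on only the cuspidal portion of the relevant spectral total, so one must verify that the other contributions to $S$ --- old forms, the non-theta-lift part of the half-integral-weight cuspidal spectrum, and the continuous spectrum of the metaplectic cover --- are bounded unconditionally by $p^{1+o(1)}$. These are controlled by fixed-level data attached to $\Psi$ together with harmless local factors at $p$, so the required estimates should be routine but demand care. Once completed, the positivity of the identity relating $S$ to the triple-product moment transfers the $p^{1+o(1)}$ bound, and the equivalence from the first step yields $B \ll p^{1/2 + o(1)}$ as claimed.
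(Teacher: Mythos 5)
Your proposal diverges substantially from the paper's proof, and its central step contains a conceptual error.

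The paper's argument is short and avoids spectral decomposition entirely. It stays with the object $\|\theta h^\sharp\|^2$ already present in the proof of Theorem \ref{thm:ad}, uses the pointwise lower bound $\theta(z) \gg \htt(z)^{1/4}$ valid on a neighborhood $\{y \geq y_0\}$ of the cusp, and then applies Parseval to the Fourier expansion \eqref{eq:defn-h-sharp} of $h^\sharp$ over that region to obtain directly
\begin{equation*}
p \, \|\theta h^\sharp\|^2 \ \gg \ \int_{y \geq y_0} y^{1/2} \int_{x=0}^p |h^\sharp(x+iy)|^2 \, \frac{dx\,dy}{y^2} \ = \ p^{1/2} \sum_n \frac{|b(pn)|^2}{|n|^{1/2}} V(y_0 n/p),
\end{equation*}
with $V$ as in \eqref{eq:formula-V}. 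The hypothesis $L(\Psi,\tfrac12)\neq 0$ then enters in two ways: it makes the constant $c$ in \eqref{eq:b-d-in-terms-of-L} nonzero (so $|b(d)|^2 \asymp L(\Psi\otimes\chi_d,\tfrac12)$), and it makes the Whittaker profile $W$ of $h$ nonvanishing, whence $V(u)\gg 1$ for $u\leq y_0$. Combined with the conditional bound $\|\theta h^\sharp\|^2\ll p^{o(1)}$ this yields the theorem. Nothing like a Rankin--Selberg unfolding or a spectral expansion of $|\widetilde\Psi|^2$ appears.

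The more serious issue with your sketch is the identification you make in your third paragraph. You propose to pair $|\widetilde\Psi|^2$ against an Eisenstein-type series isolating frequencies divisible by $p$ and then spectrally decompose, claiming the cuspidal piece is a positive multiple of $\sum_{\varphi\in\mathcal{F}(p)} L(\varphi\otimes\varphi\otimes\Psi,\tfrac12)$. That is not what such a decomposition produces. The cuspidal inner products arising from a spectral expansion of $|\widetilde\Psi|^2$ against weight-zero cusp forms $\varphi$ are periods of the shape $\langle |\widetilde\Psi|^2, \varphi\rangle$, which via the Shimura correspondence and Waldspurger-type formulas are governed by $L(\Psi\otimes\Psi\otimes\varphi,\tfrac12) = L(\Sym^2\Psi\otimes\varphi,\tfrac12)L(\varphi,\tfrac12)$, with $\Psi$ appearing twice because $\widetilde\Psi$ does. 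The hypothesis \eqref{eq:lindelof-bound-for-triple-product-open} instead controls $L(\varphi\otimes\varphi\otimes\Psi,\tfrac12) = L(\Sym^2\varphi\otimes\Psi,\tfrac12)L(\Psi,\tfrac12)$, with $\varphi$ doubled. These are distinct degree-eight $L$-functions, so your step transfers the bound to the wrong family. The reason the paper's seesaw identity \eqref{eq:period-formula} works is precisely that it produces $\mu_\varphi(\Psi)$, i.e.\ the $\varphi\otimes\varphi\otimes\Psi$ period, as the spectral coefficient of $\overline\theta\, h^\sharp$ along $\varphi(4z/p)$; this is not the same as (and is considerably subtler than) a Rankin--Selberg unfolding of $|\widetilde\Psi|^2$. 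In addition, your last paragraph defers the estimation of the non-cuspidal, old-form, and metaplectic-continuous-spectrum contributions, a burden that the paper's non-spectral argument never incurs; so even after correcting the $L$-function mismatch, the proposal remains substantially incomplete.
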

The Lindel{\"o}f hypothesis
implies that the conclusion of Theorem
\ref{eqn:sparse-quadratic-twist-sum-divided-by-rt-n} holds
without the assumption that $L(\Psi,\tfrac{1}{2}) \neq 0$, but
this assumption is necessary for our proof.
The issue is that if
$L(\Psi,\tfrac{1}{2}) = 0$, then
$L(\varphi \otimes \varphi \otimes \Psi,\tfrac{1}{2}) = L(\ad
\varphi \otimes \Psi,\tfrac{1}{2}) L(\Psi,\tfrac{1}{2}) = 0$, so
our hypotheses \eqref{eq:lind-conj-level} and
\eqref{eq:lindelof-bound-for-triple-product-open} hold
trivially,
hence their assumption is not helpful.

We note that the family size in \eqref{eq:quad-simple} is
$\asymp p$, while the analytic conductor in the largest dyadic
range of the sum is $\asymp p^4$, so one might expect the
difficulty of the moment problem addressed by
\eqref{eq:quad-simple}
to be comparable to that for
\begin{equation}\label{eq:psi-chi-n-second-moment}
  \sum_{n:|n| < p} L(\Psi \otimes \chi_n, \tfrac{1}{2})^2,
\end{equation}
\begin{equation}\label{eq:quad-simple-Eis-variant}
  \sum_{n:|n| < p} L(\chi_{p n}, \tfrac{1}{2})^2,
\end{equation}
or
\begin{equation}\label{eq:hb-4th-moment}
  \sum_{n:|n| < p} L(\chi_n, \tfrac{1}{2})^4.
\end{equation}
One may understand \eqref{eq:quad-simple-Eis-variant} as the
variant of \eqref{eq:quad-simple} obtained by taking for $\Psi$
an Eisenstein series.
Heath--Brown
\cite{MR1347489}
proved an upper bound
$p^{1+o(1)}$
for \eqref{eq:hb-4th-moment},
and it seems likely that the same proof
works also for \eqref{eq:quad-simple-Eis-variant}
and \eqref{eq:psi-chi-n-second-moment};
a closely related argument appears implicitly in
\cite{MR1833252}.
We note also that
Soundararajan--Young
\cite{MR2677611}
have established 
an asymptotic formula, conditional on GRH,
for a mild variant of
\eqref{eq:psi-chi-n-second-moment}.

The unconditional estimate \eqref{eq:quad-simple}
established here
seems 
beyond the limits the methods indicated
in the preceeding paragraph:
after
applying an approximate functional equation,
one faces (smooth) sums roughly of the shape
\begin{equation}\label{eq:2}
  S :=
  \sum_{m \sim p^2, n \sim p}
  \frac{\lambda(m)
    \chi_{p n}(m)
  }{\sqrt{m}},
\end{equation}
where
$\lambda(m)$ denotes the $m$th normalized Fourier coefficient of
$\Psi$.
To establish
\eqref{eq:quad-simple}
in this way,
one must show that
$S \ll p^{1+o(1)}$,
which seems hopeless.

We note that the proof of
\eqref{eq:quad-simple}
is specific to the central point
$s=1/2$,
while the proofs indicated above
of analogous estimates for
\eqref{eq:psi-chi-n-second-moment},
\eqref{eq:quad-simple-Eis-variant}
or
\eqref{eq:hb-4th-moment}
apply more generally to $s = 1/2 + it$ for any fixed $t$.

We prove Theorem \ref{thm:quad} at the end of \S\ref{sec-4}.
The proof
goes by the
connection between the values
$L(\Psi \otimes \chi_{p n},\tfrac{1}{2})$ and the squared
magnitudes $|b(p n)|^2$ of the Fourier coefficients of a
half-integral weight lift of $\Psi$;
what we really show is (for instance)
\begin{equation}
  \sum_{n:|n| < p}
|b(p n)|^2 \ll p.
\end{equation}
Such estimates improve in the indicated
range on the diagonal case of
those of
Duke--Iwaniec \cite[(4)]{MR1045402},
which specialize to
(the analogue for holomorphic forms of)
\begin{equation}\label{eq:DI-weaker}
  \sum_{n: |n| < p} |b(p n)|^2
  \ll p^{3/2+\eps}.
\end{equation}

\subsection{Method}
The basic idea behind the proof is to write
\begin{equation}\label{eq:}
  \mu_\varphi(\Psi) = \langle \varphi, G \rangle
\end{equation}
for some automorphic function $G$, and then to estimate the
$L^2$-norm $\langle G, G \rangle$.
The method applies also to
cocompact arithmetic hyperbolic surfaces.
A high level overview is
given in \S\ref{sec-2}.  The overall strategy is related to that
employed in our work on the quantum variance
\cite{nelson-variance-73-2,nelson-variance-II,
  nelson-variance-3} and subconvexity
\cite{nelson-subconvex-reduction-eisenstein} problems, and also
to recent work of Raphael Steiner \cite{2018arXiv181103949S} on
the sup norm problem.

\section{Division of the proof}
\label{sec-2}
The purpose of this section is to reduce
the proof of our main results
to that of some independent claims
to be verified in the body of the paper.

\subsection{Jacobi theta function}
\label{sec:jacobi-thet}
For $z = x + i y \in \mathbb{H}$,
set $\theta(z) := y^{1/4} \sum_{n \in \mathbb{Z}} e(n^2 z)$,
where $e(z) := e^{2 \pi i z}$.
By considering Fourier expansions
at the cusps of $\Gamma_0(4)$,
we obtain the crude upper bound
\begin{equation}\label{eq:theta-height-1-4}
  \theta(z) \ll \htt(z)^{1/4},
\end{equation}
where $\htt(z) := \max_{\gamma \in \SL_2(\mathbb{Z})} \Im(\gamma
z)$.

\subsection{Theta multiplier}
\label{sec-2-1}
For $\gamma \in \Gamma_0(4)$,
set $J(\gamma,z) := \theta(\gamma z) / \theta(z)$.
We recall from \cite[Prop 2.2]{MR0332663} that if $\gamma = \begin{pmatrix}
  a & b \\
  c & d
\end{pmatrix}$,
then
\begin{equation}\label{eq:formula-for-metaplectic-cocycle}
  J(\gamma,z)
  =
  \eps(\gamma)
  \frac{\sqrt{c z + d}}{|c z + d|^{1/2}},
  \quad
  \eps(\gamma)
  =
  \eps_d^{-1}
  \qr{c}{d}.
\end{equation}
Here we define the square root
by $\sqrt{z} := |z|^{1/2} e^{i \arg (z)/2}$
if $z = |z| e^{i \arg(z)}$
with $\arg(z) \in (-\pi,\pi]$,
$\eps_d = 1$ or $i$ according as $d \equiv 1$ or $-1$ modulo $4$,
and
$\qr{c}{d}$ is the ``quadratic residue symbol'' characterized
in \cite[p442]{MR0332663}.
\subsection{Petersson inner product}
\label{sec-2-2}
For a congruence subgroup $\Gamma \leq \Gamma_0(4)$ and $\kappa
\in \mathbb{Z}$,
we call
$F : \mathbb{H} \rightarrow \mathbb{C}$
\emph{modular of weight $\kappa/2$ on $\Gamma$}
if $F(\gamma z) = J(\gamma,z)^\kappa F(z)$ for $\gamma \in
\Gamma$.
If $F_1,F_2$ are modular of weight $\kappa/2$ on $\Gamma$,
then the function $F_1 \overline{F_2}$ is $\Gamma$-invariant;
if it induces a function on $\Gamma \backslash \mathbb{H}$
that is integrable
with respect to the measure $d \mu(z) := \frac{d x \, d
  y}{y^2}$,
then we define the normalized Petersson inner product
\[
\langle
F_1, F_2 \rangle
:=
\frac{1}{[\PSL_2(\mathbb{Z}) : \overline{\Gamma }]}
\int_{z \in \Gamma \backslash \mathbb{H}}
F_1(z) \overline{F_2(z)} \, d \mu(z)
\]
and associated norm $\|F\| := \langle F, F \rangle^{1/2}$;
here $\overline{\Gamma} \leq \PSL_2(\mathbb{Z})$ denotes the
image of $\Gamma$.
Note that the definition of $\langle,\rangle$ is invariant
under shrinking $\Gamma$.
\subsection{The half-integral weight lift and its Fourier expansion}
\label{sec-2-3}
We recall in \S\ref{sec-3-3-2}
the construction via theta lifting of a Maass/inverse-Shimura/Shintani lift
$h$ of $\Psi$.
It is nonzero precisely when $L(\Psi,\tfrac{1}{2}) \neq 0$.
It is modular of weight $1/2$ on $\Gamma_0(4)$,
and belongs to an analogue of the Kohnen plus space.
It admits a Fourier expansion
\[
h(z) = \sum_{n \in \mathbb{Z}_{\neq 0}}
\frac{b(n)}{|n|^{1/2}}
W(n y)
e(n x)
\]
where $W$ is a Whittaker function (see \S\ref{sec-3-3-3})
and $b(n) = 0$ unless $n \equiv 0,1 \pod{4}$.
When $d$ is a fundamental discriminant,
we have by \cite[(5.17)]{MR3549627}
\begin{equation}\label{eq:b-d-in-terms-of-L}
  |b(d)|^2
= c L(\Psi \otimes \chi_d, \tfrac{1}{2}).
\end{equation}
Here $c$ depends only upon $\Psi$,
and $c \neq 0$ whenever $h \neq 0$.
More generally, any $n \equiv 0,1\pod{4}$
may be written uniquely
as $n = d \delta^2$,
where $d$ is a fundamental discriminant and $\delta$ is a
natural number;
we may then deduce via the Shimura relation
the estimate (see  \cite[Prop 6.1]{2016arXiv160604119L})
\[|b(n)| \ll_{\eps} |b(d)| \delta^{\vartheta+\eps},\]
where $\vartheta \in [0,7/64]$
quantifies the known bounds towards the Ramanujan conjecture for
the Hecke eigenvalues of $\Psi$.
Conrey--Iwaniec \cite{MR1779567} have shown that
$L(\Psi \otimes \chi_d, \tfrac{1}{2}) \ll_\eps d^{1/3+\eps}$.
Since $\vartheta \leq 1/3$,
it follows in general that
\begin{equation}\label{eq:bound-for-b-of-n}
  |b(n)|^2 \ll_\eps |n|^{1/3+\eps}.
\end{equation}
On the other hand, it is expected
(by the Lindel{\"o}f hypothesis
for $L(\Psi \otimes \chi_d, \tfrac{1}{2})$
and the Ramanujan conjecture for $\Psi$)
that
\begin{equation}\label{eq:bound-for-b-of-n-conditional}
  |b(n)|^2 \ll_\eps |n|^{\eps}.
\end{equation}
\subsection{Application of an incomplete Hecke operator}
\label{sec-2-4}
We denote by $h^\sharp$
the (normalized) application
to $h$ of a variant of the classical ``$U_p$'' operator:
\begin{equation}\label{eq:defn-h-sharp}
  h^\sharp(z)
  :=
  \frac{1}{p^{1/2}}
  \sum_{j \in \mathbb{Z}/p \mathbb{Z} }
  h (\frac{z + p j }{p^2})
  =
  \sum_{n \in \mathbb{Z}_{\neq 0}}
  \frac{b(p n)}{|n|^{1/2}}
  W(\frac{n y}{p})
  e (\frac{n x}{p}).
\end{equation}
We record in \S\ref{sec:fourier-exp-h-sharp-other-cusps}
the level of $h^\sharp$.
We show in \S\ref{sec-4} that
\begin{equation}\label{eqn:l-2-bound-for-h-sharp}
  \langle h^\sharp, h^\sharp \rangle = \langle h, h \rangle,
\end{equation}
which reflects a special feature of $1/2$-integral weight forms.
\subsection{Properties of the varying forms}
\label{sec-2-5}
We assume that each $\varphi \in \mathcal{F}(p)$
is arithmetically normalized, so that
\[
\varphi(z)
=
\sum_{n \in \mathbb{Z}_{\neq 0}}
\frac{\lambda_\varphi(|n|)}{|n|^{1/2}}
W_{\varphi}(n y) e(n x),
\]
where $\lambda_{\varphi}(1) = 1$
and $W_{\varphi}(y) = 2 y^{1/2} K_{i t_{\varphi}}(2 \pi y)$ for $y > 0$,
where $1/4 + t_\varphi^2$ is the Laplace eigenvalue of $\varphi$.
Each $\varphi \in \mathcal{F}(p)$
is an eigenfunction
of the Atkin--Lehner/Fricke involution with eigenvalue $\pm 1$,
that is to say,
$\varphi(-1/(p z)) = \pm \varphi(z)$,
or equivalently,
\begin{equation}\label{eq:fricke-involution-applied-to-phi}
  \varphi(-1/z)
  = \pm \varphi(z/p),
\end{equation}
and it is known \cite{HL94} that
\begin{equation}\label{eq:HL}
  p^{-\eps} \ll_\eps \langle \varphi, \varphi  \rangle \ll_{\eps} p^{\eps}.
\end{equation}
\subsection{An explicit seesaw identity}
\label{sec-2-6}
We show in \S\ref{sec-5}
that for $\varphi \in \mathcal{F}(p)$,
\begin{equation}\label{eq:period-formula}
  \mu_{\varphi}(\Psi)
  = \pm 4 \langle \varphi (\tfrac{4 z}{p}) \vartheta(z), h^\sharp(z) \rangle,
\end{equation}
where $\pm$ is as in \eqref{eq:fricke-involution-applied-to-phi}.
Here we abuse notation mildly by writing
simply
(e.g.)
$\varphi (\tfrac{4 z}{p})$
for the function
$z \mapsto \varphi (\tfrac{4 z}{p})$.
A notable feature of the RHS of \eqref{eq:period-formula}
is that it depends linearly upon $\varphi$.

\begin{remark*}
  Related identities involving forms of level $1$ have been
  given by Biro \cite{MR2783966}; the proof given here is
  different and applies more generally (e.g., also to compact
  arithmetic quotients $\Gamma \backslash \mathbb{H}$).  See
  also Ichino \cite[\S11]{MR2198222}.  Similar identities
  are also implicit in our work \cite{nelson-variance-73-2,
    nelson-variance-II,nelson-variance-3} on the quantum variance problem.
\end{remark*}

\subsection{Reduction to period bounds}
\label{sec-2-7}
By \eqref{eq:HL}, \eqref{eq:period-formula}
and Bessel's inequality,
we have
\begin{equation}\label{eq:post-bessel}
  \sum_{\varphi \in \mathcal{F}(p)}
|\mu_\varphi(\Psi)|^2
=
16
\sum_{\varphi \in \mathcal{F}(p)}
\frac{|\langle
  \varphi (\tfrac{4 z}{p}),
  \overline{\theta(z)}
  h^\sharp(z)
  \rangle|^2}{\langle \varphi, \varphi  \rangle}
\ll
p^{o(1)}
\langle \overline{\theta} h^\sharp,
\overline{\theta} h^\sharp
\rangle,
\end{equation}
so the proof of Theorem \ref{thm:ad}
reduces to that of the estimate
\begin{equation}\label{eq:required-bound-theta-h-sharp}
  \|\theta h^\sharp\|^2
  \ll p^{1/4+o(1)}.
\end{equation}

\begin{remark*}
  This part of the argument is reminiscent of arguments in
  \cite{MR2972602, MR780071, MR2726097} and (implicitly) in
  \cite{venkatesh-2005, michel-2009}, among other places.
  A
  more general (but less elementary) approach to identities like \eqref{eq:post-bessel}
  is given in \cite{nelson-subconvex-reduction-eisenstein},
  following \cite{MR3291638}.
\end{remark*}

\subsection{The basic inequality\label{sec:baisc-ineq-summary}}
\label{sec-2-8}
We show in \S\ref{sec-6} that for any $T \geq 1$,
\begin{equation}\label{eq:flexible-bound-theta-h-sharp}
\|\theta h^\sharp \|^2 \ll T^{1/2} + p^{-1/2} R,
\end{equation}
where
\[
R :=
\sum_{n}
\frac{|b(p n)|^2}{|n|^{1/2}}
\left( 1 + \frac{|n|}{p/T}
\right)^{-100}.
\]
This is a key step in the argument, so we sketch here the basic
idea behind the proof.  We estimate separately the
contributions to $\|\theta h^\sharp\|^2$ from the ranges
$\{z : \htt(z) \leq T\}$ and $\{z : \htt(z) > T\}$, where $\htt$
is as in \S\ref{sec:jacobi-thet}.  In both ranges we apply the
pointwise bound \eqref{eq:theta-height-1-4} for $\theta$.  For
the range where $\htt(z) \leq T$, we apply the $L^2$-bound
\eqref{eqn:l-2-bound-for-h-sharp} for $h^\sharp$ to obtain the
estimate $\ll T^{1/2}$.  For the range where $\htt(z) > T$, we
apply Parseval to the Fourier expansion of $h^\sharp$
and appeal to the rapid decay of the Whittaker function $W$ to
establish estimates such as
\begin{equation}\label{eqn:}
  \frac{1}{p} \int_{y=T}^{\infty} y^{1/2} \int_{x=0}^p
  |h^\sharp(x+iy)|^2 \, \frac{d x \, d y}{y^2} \ll p^{-1/2} R.
\end{equation}

\begin{remark*}
  It would be possible to refine the present
  analysis by  employing the spectral expansion of $|\theta|^2$
  as in \cite{nelson-theta-squared}
  in place of the upper bound \eqref{eq:theta-height-1-4},
  but doing so does not seem
  to lead to stronger unconditional results.
\end{remark*}

\subsection{Completion of the proof}
\label{sec-2-9}
By taking $T = p^{1/2}$
in \eqref{eq:flexible-bound-theta-h-sharp}
and appealing to the Conrey--Iwaniec bound
\eqref{eq:bound-for-b-of-n},
we readily obtain \eqref{eq:required-bound-theta-h-sharp}, hence Theorem \ref{thm:ad}.
For the proof of Theorem \ref{thm:conditional-lind-implies-lind},
we note that its hypotheses imply \eqref{eq:bound-for-b-of-n-conditional},
which gives the required bound upon
taking $T = 1$ 
in \eqref{eq:flexible-bound-theta-h-sharp}.

The proof of Theorem \ref{thm:converse-estimate}
is recorded in \S\ref{sec:converse-estimate}.

\begin{remark}
  Taking $T = p^{1+\eps}$ in \eqref{eq:flexible-bound-theta-h-sharp}
  and appealing to
  any convexity bound of the form $b(p n) \ll (p n)^{O(1)}$
  already gives
  the nontrivial estimate
  $\sum_{\varphi \in \mathcal{F}(p)}
  |\mu_\varphi(\Psi)|^2 \ll p^{1/2+o(1)}$.
\end{remark}
\begin{remark}
  It is natural to ask the questions:
  \begin{enumerate}
  \item Is the Conrey--Iwanec bound an essential input to the
    method?
  \item Can one do better by exploiting the average over $n$ in
    $R$?
  \end{enumerate}
  To address these, let $R^\flat$ denote the subsum over $R$ obtained by
  restricting to summation indices $n$ for which $p n$ is a
  fundamental discriminant.
  Then
  \[
  R^\flat
  =
  c
  \sum_{
    \substack{
      n : \\
      p n \text{ is fundamental}
    }
  }
  \frac{L(\Psi \otimes \chi_{p n},\tfrac{1}{2})}{|n|^{1/2}}
  \left( 1 + \frac{|n|}{p/T} \right)^{-100}
  \]
  for some $c$ depending only upon $\Psi$.
  It seems likely that
  \begin{enumerate}
  \item one can show directly using an
    approximate functional equation \cite[\S5.2]{MR2061214}
    and Heath--Brown's large sieve
    for quadratic characters \cite{MR1347489}
    that for $1 \leq T \leq p$, one has
    $R^{\flat} \ll T^{-1/2} p^{1 + o(1)}$,
    and that
  \item one can establish the same bound for $R$ by using the
    Rankin--Selberg upper bound
    $\sum_{n \leq x} |\lambda_{\Psi}(n)|^2 \ll x$ to control the
    contribution from non-fundamental discriminants.
  \end{enumerate}
  If so, then by taking $T = p^{1/2}$ one obtains a proof of
  Theorem \ref{thm:ad} that does not rely upon the
  Conrey--Iwaniec bound.  Conversely, we do not know how to do
  better by exploiting the average over $n$ in $R$.
\end{remark}
\section{Preliminaries}
\label{sec-3}
\subsection{Generalities}
\label{sec-3-1}
\subsubsection{}
\label{sec-3-1-1}
Let $B,C$ be coprime natural numbers.
Set
\[
\Gamma_0(C/B)
:=
\left\{ \begin{pmatrix}
    a & b \\
    c & d
  \end{pmatrix} \in \SL_2(\mathbb{Z}) :
  B \mid b,
  C \mid c
\right\}.
\]
When $B = 1$, this is the standard definition of $\Gamma_0(C) = \Gamma_0(C/1)$.
In general, $\Gamma_0(C/B)$ is a congruence
subgroup of $\SL_2(\mathbb{Z})$ that is conjugate to $\Gamma_0(B C)$.

As motivation for the notation,
note that if $F : \mathbb{H} \rightarrow \mathbb{C}$ is
$\SL_2(\mathbb{Z})$-invariant,
then the function $z \mapsto F(z C/B)$ is $\Gamma_0(C/B)$-invariant.
\subsubsection{}
\label{sec-3-1-2}
Let $R := M_2(\mathbb{Z})$ denote the ring of $2 \times 2$
integral matrices.
Set $S := \mathbb{Z} + 2 R$.
We use a superscripted $0$ to denote ``traceless'' elements,
so that for instance,
\[
M_2(\mathbb{R})^0
= \left\{ \begin{pmatrix}
    a & b \\
    c & -a
  \end{pmatrix} : a,b,c \in \mathbb{R} \right\},
\]
\[
S^0 = \left\{ \begin{pmatrix}
    a & 2 b \\
    2 c & -a
  \end{pmatrix} : a,b,c \in \mathbb{Z}  \right\}.
\]
Note that
\begin{equation}\label{eq:decomp-S-Z-S0}
  S = \mathbb{Z} \oplus S^0
\end{equation}

\subsubsection{}
\label{sec-3-1-3}
For natural numbers $B,C$,
set
\[
R(C/B)
:=
\left\{ \begin{pmatrix}
    a & b \\
    c & d
  \end{pmatrix} \in M_2(\mathbb{Q}) :
  a,d \in \mathbb{Z};
  b \in B^{-1} \mathbb{Z},
  c \in C \mathbb{Z}
\right\}.
\]
It is a lattice in $M_2(\mathbb{Q})$.
We abbreviate $R(C/1) := R(C)$.
We note that $R(C/B)$ is not directly related to $\Gamma_0(C/B)$
except when $B = 1$,
in which case $\Gamma_0(C) = \SL_2(\mathbb{Z}) \cap R(C)$.
The significance of the notation is that
$R(C/B)$
and
$R(B/C)$ are dual lattices with respect
to the quadratic form on $M_2(\mathbb{Q})$
defined by the determinant.

When $B,C$ are odd,
we set $S(C/B) := S \cap R(C/B)
= \mathbb{Z} + 2 R(C/B)$ and $S^0(C/B) := S^0 \cap S(C/B)$.
\subsubsection{}
\label{sec-3-1-4}
For $w = u + i v \in \mathbb{H}$,
define $\sigma_w \in \SL_2(\mathbb{R})$
by the formula
\[
\sigma_w := \begin{pmatrix}
  v^{1/2} & u v^{-1/2} \\
   & v^{-1/2}
\end{pmatrix},
\]
so that $\sigma_w i = w$.
\subsection{Theta kernels}
\label{sec-3-2}
We recall the definitions and basic properties of some theta
kernels.
We refer to \cite{MR0389772}, \cite[\S2]{watson-2008}
and \cite[App. B]{MR3356036} for details.

In what follows,
take $w, w_1,w_2, z = x + iy \in \mathbb{H}$.
\subsubsection{}
\label{sec-3-2-1}
Define
$P : M_2(\mathbb{R}) \rightarrow \mathbb{R}$
by
\[
P (\begin{pmatrix}
  a & b \\
  c & d
\end{pmatrix})
:=
\frac{a^2 + b^2 + c^2 + d^2}{2},
\]
$\phi_{w,z}^0 : M_2(\mathbb{R})^0 \rightarrow \mathbb{C}$
by
\[
\phi_{w,z}^0(\alpha)
:=
\frac{1}{2 \pi}
y^{3/4}
\exp(- 2 \pi y P(\sigma_w^{-1} \alpha \sigma_w))
e(x \det(\alpha)),
\]
and
$\phi_{w_1,w_2,z} : M_2(\mathbb{R}) \rightarrow
\mathbb{C}$
by
\[
\phi_{w_1, w_2,z}(\alpha)
:=
\frac{1}{2 \pi}
y
\exp(- 2 \pi y P(\sigma_{w_1}^{-1} \alpha \sigma_{w_2}))
e(x \det(\alpha)).
\]

Note that for $\alpha = m + \beta$ with $m \in \mathbb{R}, \beta
\in M_2(\mathbb{R})^0$,
\begin{equation}\label{eq:factorization-schwartz-at-infinity}
  \phi_{w,w,z}(\alpha)
  = y^{1/4} e(m^2 z) \phi_{w,z}^0(\beta).
\end{equation}

\subsubsection{}
\label{sec-3-2-2}
Set
\[
\theta(w,z)
:=
\sum_{\alpha \in S^0}
\phi_{w,z}^0(\alpha).
\]
Then $\theta$ defines a theta kernel
({\`a} la Maass--Shintani--Waldspurger),
of weight $-1/2$ on $\Gamma_0(4)$ in the variable $w$
and of weight $0$ on $\SL_2(\mathbb{Z})$ in the variable $z$
(cf. \cite{MR0389772}, \cite[\S2]{MR1244668}).

\subsubsection{}
\label{sec-3-2-3}
For a lattice
$L \subseteq M_2(\mathbb{Q})$,
set
\[
\theta(L;w_1,w_2,z)
:=
\sum_{\alpha \in L}
\phi_{w_1,w_2,z}(\alpha).
\]
This defines a modular function
of weight $0$ in each variable
with respect to suitable congruence subgroups
(see \cite{MR0389772}).

\subsubsection{}
The lattice $R(p)$ has discriminant $p^2$
and dual $R(1/p)$.
The quadratic form $(R(p),\det)$ has signature $(2,2)$.
Thus (see \cite{MR0389772})
\begin{equation}\label{eq:aut-rels-for-theta-4}
  \theta(R(p);w_1,w_2,-1/z)
  = p^{-1}
  \theta(R(1/p); w_1, w_2, z).
\end{equation}

\subsection{Ternary theta lifts}
\label{sec-3-3}
\subsubsection{\label{sec:Psi-Fourier-exp}}
\label{sec-3-3-1}
We assume that
$\Psi : \SL_2(\mathbb{Z}) \backslash \mathbb{H} \rightarrow \mathbb{C}$
is arithmetically normalized,
so that its Fourier expansion reads
\[
\Psi(w)
=
\sum_{n \in \mathbb{Z}_{\neq 0}}
\frac{\lambda_{\Psi}(n)}{|n|^{1/2}}
W_{\Psi}(n y) e(n x)
\]
where $\lambda_{\Psi}(n) = \lambda_{\Psi}(|n|)$
satisfies
$\lambda_{\Psi}(1) = 1$
and so that
the Ramanujan conjecture
reads $|\lambda_{\Psi}(n)| \leq \sum_{d \mid n} 1$,
while $W_{\Psi}(y) = 2 |y|^{1/2} K_{i t_\Psi}(2 \pi |y|)$,
with $1/4 + t_\Psi^2$ the Laplace eigenvalue of $\Psi$.

\subsubsection{}
\label{sec-3-3-2}
Define $h : \mathbb{H} \rightarrow \mathbb{C}$
by requiring that
\[
\overline{h(z)}
:=
\int_{w \in \SL_2(\mathbb{Z}) \backslash \mathbb{H}}
\theta(w,z)
\Psi(w) \, d \mu(w).
\]
Then $h$ is a constant multiple
of the form constructed
in
\cite[Prop 2.3]{MR1244668},
and has the properties indicated in \S\ref{sec-2-3}
(compare with \cite[\S5]{MR3549627} and \cite[\S6]{2016arXiv160604119L}).

\subsubsection{}
\label{sec-3-3-3}
The Whittaker function $W$ of $h$ is given by
$W(y) = W_{\sgn(y) /4, i t_\Psi/2} (4 \pi |y|)$.  Since $\Psi$
is non-constant, we have $t_\Psi \in \mathbb{R}$ or
$i t_\Psi \in (-1/2,0) \cup (0,1/2)$.
(The second possibility does not actually
occur,
thanks to the known Selberg eigenvalue conjecture
for $\PGL_2(\mathbb{Z})$,
but we do not need to exclude it.)
We require the estimate
(see \cite[\S7.3]{MR3096570}, \cite[(0.11)]{MR1244668}, \cite[13.14.21]{NIST:DLMF})
\begin{equation}\label{eq:whittaker-fn-estimate}
  W(y) \ll
  \min(|y|^{1/2 - \vartheta/2}, |y|^{1/4} e^{- 2 \pi |y|}).
\end{equation}
Here the implied constant and
$\vartheta \in (|\Im(t_\psi)|, 1/2)$ depend upon $t_\Psi$, which
is fixed for us.
In particular, $W(y) \ll |y|^{1/4+\delta}$ for some fixed $\delta > 0$.
\subsubsection{\label{sec:expand-h-other-cusps}}
\label{sec-3-3-4}
A set of inequivalent cusps for $\Gamma_0(4)$
is given by
$\{\infty, 0, 1/2\}$.
It is shown in \cite[\S11]{2016arXiv160604119L} that
\begin{equation}\label{eq:h-FE-at-0}
  e^{\pi i /4} (z/|z|)^{-1/2} h(-1/4z) = \sqrt{2}
\sum_{n \equiv 0(4)}
\frac{b(n)}{|n|^{1/2}}
W (\frac{n y}{4})
e (\frac{n x}{4})
\end{equation}
and
\begin{equation}\label{eq:h-FE-at-1-2}
e^{\pi i /4} (z/|z|)^{-1/2}
h (\frac{1}{2} - \frac{1}{4 z})
= \sqrt{2}
\sum_{n \equiv 1(4)}
\frac{b(n)}{|n|^{1/2}}
W (\frac{n y}{4})
e (\frac{n x}{4}).
\end{equation}
Thus the expansion of $h$ at any cusp of $\Gamma_0(4)$ is
obtained from that at the cusp $\infty$ essentially by
restricting the summation index to some congruence class modulo
$4$.  (Strictly speaking, since $h$ has
  half-integral weight, ``the'' expansion of $h$ at a cusp
  depends upon the
  choice of matrix to represent the cusp.
  We hope this abuse of
  terminology introduces no confusion.)

For $\ell = 1,2$,
set
\[
h_\ell(z) :=
\left( \frac{\ell z + 1}{|\ell z + 1|} \right)^{-1/2}
h (\begin{pmatrix}
  1 &  \\
  \ell & 1
\end{pmatrix} z).
\]
Using the readily-verified identities
\[
  h (\begin{pmatrix}
  1 &  \\
  1 & 1
\end{pmatrix} z)
= h(\frac{-1}{z+1}),
\quad 
  h (\begin{pmatrix}
  1 &  \\
  2 & 1
\end{pmatrix} z)
=
h
(
\frac{1}{2} - \frac{1/2}{2 z + 1}
),
\]
we may read off the Fourier expansions of $h_1$ and $h_2$
at the cusp $\infty$
from
\eqref{eq:h-FE-at-0}
and
\eqref{eq:h-FE-at-1-2}.
\subsection{Cusps of $\Gamma_0(4/p)$}
\label{sec-3-4}
For general background on cusps and fundamental domains
we refer to \cite{MR0314766, Iw97, MR1942691}.

\subsubsection{}
\label{sec-3-4-1}
For the remainder of \S\ref{sec-3-4},
we set $\Gamma := \Gamma_0(4/p)$.
We note that $-1 \in \Gamma$.

\subsubsection{}
\label{sec-3-4-2}
Set
$\Delta := \left\{ \pm \begin{pmatrix}
    1 & n \\
    & 1
  \end{pmatrix} : n \in \mathbb{Z}  \right\} \leq
\SL_2(\mathbb{Z})$.
Then any set $\mathcal{C} := \{\gamma_1,\dotsc,\gamma_6\}
\subseteq \SL_2(\mathbb{Z})$
consisting of elements of the form
\[
\gamma_1 :=
\begin{pmatrix}
  1 &  \\
  & 1
\end{pmatrix},
\quad
\gamma_2 :=
\begin{pmatrix}
  1 &  \\
  2 & 1
\end{pmatrix},
\quad
\gamma_3 :=
\begin{pmatrix}
  1 &  \\
  1 & 1
\end{pmatrix},
\]
\[
\gamma_4 := \begin{pmatrix}
  p & \ast \\
  4 & \ast
\end{pmatrix},
\quad
\gamma_5 := \begin{pmatrix}
  p & \ast \\
  2 & \ast
\end{pmatrix},
\quad
\gamma_6 := \begin{pmatrix}
  p & \ast \\
  1 & \ast
\end{pmatrix}
\]
gives representatives for the double coset space
$\Gamma \backslash \SL_2(\mathbb{Z}) / \Delta$,
and $\{\gamma_j \infty : j =1, \dotsc, 6\} = \{\infty, 1/2, 1, p/4, p/2, p\}$
is a maximal set of inequivalent cusps
for $\Gamma \backslash \mathbb{H}$.
\subsubsection{}
\label{sec-3-4-3}
For $\gamma \in \SL_2(\mathbb{Z})$, the width of the cusp
$\gamma \infty$
for $\Gamma \backslash \mathbb{H}$
is the cardinality $w(\gamma)$ of the preimage in $\Gamma
\backslash \SL_2(\mathbb{Z})$
of $\Gamma \gamma \Delta$.
We have
\[
w(\gamma_j) = p,p,4 p, 1, 1, 4 \text{ for } j=1, \dotsc, 6, \text{ respectively.}
\]
\subsubsection{\label{sec:fund-domain-thin-part-Gamma}}
\label{sec-3-4-4}
Recall that $\htt : \Gamma \backslash \mathbb{H} \rightarrow
\mathbb{C}$
is defined
by $\htt(z) := \max_{\gamma \in \SL_2(\mathbb{Z})} \Im(\gamma
z)$.
By tiling $\Gamma \backslash \mathbb{H}$
by translates of the standard fundamental domain for
$\SL_2(\mathbb{Z})$,
we see that $\htt(z) \geq \sqrt{3}/2$ for all $z$.
Moreover, for any $T > 1$,
the union
\begin{equation}\label{eq:fund-domain-cusp-part-Gamma}
  \cup_{\gamma \in \mathcal{C}}
  \{\gamma(x + i y) : 0 \leq x \leq w(\gamma), y \geq T\}
\end{equation}
is essentially disjoint and
gives a fundamental domain for
$\{z \in \Gamma \backslash \mathbb{H} : \htt(z) \geq T\}$.
Finally,
for given $y_0 \in (0,1)$,
the fibers
of the natural map
\[
\{x + i y : 0 \leq x \leq p, y \geq y_0\}
\rightarrow \Gamma \backslash \mathbb{H}
\]
have cardinality $O(1/y_0)$, uniformly in $p$ \cite[Lem 2.10]{MR1942691}.

\subsubsection{\label{sec:fourier-exp-h-sharp-other-cusps}}
\label{sec-3-4-5}
Using \cite[Prop 1.3, Prop 1.5]{MR0332663} (or more precisely
their analogue for Maass forms),
we verify that $h^\sharp$ is modular
(of weight $1/2$)
on $\Gamma$.
Using
\S\ref{sec:expand-h-other-cusps},
we see that
the Fourier expansion of $h^\sharp$ at the cusps $1/2, 1$ of
$\Gamma$ is obtained
from its expansion \eqref{eq:defn-h-sharp} at $\infty$ 
essentially by restricting the summation index
to a congruence class modulo $4$.
More precisely, for $\ell = 1, 2$,
define
$h_\ell^\sharp$
in terms of $h_\ell$ analogously to how
$h^\sharp$ was defined in terms of $h$.
\begin{lemma*}
  We have
\begin{equation}
  \left( \frac{\ell z + 1}{|\ell z + 1|} \right)^{-1/2}
  h^\sharp (\begin{pmatrix}
    1 &  \\
    \ell & 1
  \end{pmatrix} z)
  =
  h_\ell^\sharp(z).
\end{equation}
\end{lemma*}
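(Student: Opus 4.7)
The plan is to reduce the identity to a matrix decomposition in $\SL_2(\mathbb{R})$ and apply the modularity of $h$ on $\Gamma_0(4)$ term by term. Writing $\tau_j := \begin{pmatrix} 1/p & j \\ 0 & p \end{pmatrix}$, so that $h^\sharp(z) = p^{-1/2} \sum_{j} h(\tau_j z)$ and $h_\ell^\sharp(z) = p^{-1/2} \sum_{j} h_\ell(\tau_j z)$, I would seek, for each $j \in \mathbb{Z}/p\mathbb{Z}$, an integer representative $j' \equiv j \pmod p$ and a matrix $\delta_j \in \Gamma_0(4)$ with
\begin{equation*}
\tau_j \gamma_\ell \;=\; \delta_j \, \gamma_\ell \tau_{j'} \qquad \text{in }\SL_2(\mathbb{R}).
\end{equation*}
Direct expansion forces $j' \equiv j \pmod p$ for $\delta_j$ to have integer entries, and yields a $\delta_j$ whose bottom row is $\bigl(\ell(p\ell j' + p^2 - 1),\; 1 - p\ell j'\bigr)$. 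For $\ell = 2$ the condition $4 \mid C$ is automatic (using $p^2 \equiv 1 \pmod 8$), so I take $j' = j$. For $\ell = 1$ I choose $j' = j + p\,k(j)$ with $k(j) \in \{0,1,2,3\}$ determined by $j$ and $p$ modulo $4$ to arrange $4 \mid C$; this freedom in the representative is harmless since $h_1$ is $1$-periodic, as one reads off from \eqref{eq:h-FE-at-0}.

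Modularity then yields $h(\tau_j \gamma_\ell z) = J(\delta_j, \gamma_\ell \tau_{j'} z) \, h(\gamma_\ell \tau_{j'} z)$. The key simplification is the identity
\begin{equation*}
C \cdot (\gamma_\ell \tau_{j'} z) + D \;=\; \frac{p^2 (\ell z + 1)}{\ell z + p\ell j' + p^2},
\end{equation*}
which follows by a short expansion of the numerator. Since $\ell z + 1$ and $\ell z + p\ell j' + p^2$ both lie in the upper half plane, principal square roots are multiplicative, and the \emph{geometric} factor $\sqrt{Cw+D}/|Cw+D|^{1/2}$ of the theta multiplier simplifies to $\bigl(\tfrac{\ell z + 1}{|\ell z + 1|}\bigr)^{1/2} \bigl(\tfrac{\ell \tau_{j'} z + 1}{|\ell \tau_{j'} z + 1|}\bigr)^{-1/2}$, which is precisely the ratio of the multipliers appearing on the two sides of the lemma.

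It remains to verify the \emph{arithmetic} factor $\eps_D^{-1} \bigl(\tfrac{C}{D}\bigr) = 1$ from \eqref{eq:formula-for-metaplectic-cocycle} for each $j$ with the chosen $j'$. I anticipate this is the main technical obstacle: both $\eps_D$ and the quadratic residue symbol fluctuate with $j$ modulo small powers of $2$, and only their combined product stabilizes to $1$ after quadratic reciprocity and the explicit congruences arranged by $k(j)$ (which in particular ensure $D = 1 - p\ell j'$ is odd). Once this is dispatched, the map $j \mapsto j' \bmod p$ is the identity permutation of $\mathbb{Z}/p\mathbb{Z}$, and each summand of $\bigl(\tfrac{\ell z + 1}{|\ell z + 1|}\bigr)^{-1/2} h^\sharp(\gamma_\ell z)$ transforms into $h_\ell(\tau_{j'} z) = h_\ell(\tau_j z)$, giving $h_\ell^\sharp(z)$ upon summation.
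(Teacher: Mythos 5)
Your factorization $\tau_j\gamma_\ell = \delta_j\,\gamma_\ell\tau_{j'}$ is algebraically the same move as the paper's (the paper writes the summand as $(\tau_j\gamma_\ell\tau_j^{-1})(\tau_jz)$ and verifies that $\tau_j\gamma_\ell\tau_j^{-1}$ has the explicit form $\begin{pmatrix}1+jp\ell & -j^2\ell\\ p^2\ell & 1-jp\ell\end{pmatrix}$, which is your $\delta_j\gamma_\ell$), and your geometric-factor computation $C\cdot(\gamma_\ell\tau_{j'}z)+D = p^2(\ell z+1)/(\ell z+p\ell j'+p^2)$ is correct. But two points of your plan do not hold up.

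First, the arithmetic factor $\eps(\delta_j) = \eps_D^{-1}\bigl(\tfrac{C}{D}\bigr)$ is \emph{not} identically $1$ for your proposed representatives. Take $\ell=2$, $j'=j$ as you suggest: with $p=3$, $j=1$ one gets $(C,D)=(28,-5)$, so $\eps_D^{-1}=-i$, $\bigl(\tfrac{28}{-5}\bigr)=-1$, and $\eps(\delta_j)=i\ne 1$. So your choice of $j'$ for $\ell=2$ already fails. What is crucial, and what you omit, is the paper's specific normalization of the representative: it takes $j$ to be a \emph{negative} integer divisible by $4$. Redoing the same example with $j=j'=-4\equiv 2\ (\mathrm{mod}\ 3)$ gives $(C,D)=(-32,25)$ and $\eps(\delta_j)=1$. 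This representative choice is not a cosmetic convenience; it is the precise hypothesis under which Shimura's Lemma 3.4 of \cite{MR0332663} (which the paper invokes with $N=4$, $M=1$, $K=4$) applies, and it is what makes the multiplier drop out.

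Second, your claim that the representative freedom is ``harmless since $h_1$ is $1$-periodic'' is false, and this matters. Reading the expansions \eqref{eq:h-FE-at-0}--\eqref{eq:h-FE-at-1-2}, the Fourier frequencies of $h_\ell$ lie in $\tfrac{1}{4}\mathbb{Z}$, not $\mathbb{Z}$; one checks directly, for instance, that $h_2(z+1)=i\,h_2(z)$. Consequently $h_\ell(\tau_{j+pk}z)=h_\ell(\tau_jz+k)$ acquires a nontrivial phase as $k$ varies, so your adaptive shift $j'=j+pk(j)$ for $\ell=1$ changes the right-hand summand rather than leaving it alone. The phases from $\eps(\delta_j)$ and from the quasi-periodicity of $h_\ell$ do in fact cancel against each other, but your argument neither observes this nor tracks it, and instead rests on the two incorrect assertions above. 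The paper sidesteps the entire bookkeeping by fixing the representative once (negative, $\equiv 0\ \mathrm{mod}\ 4$) and then delegating the verification of the automorphy factor to Shimura's ready-made lemma; any self-contained proof along your lines would need to carry out that verification, representative by representative, which you have not done.
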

\begin{proof}
  Set $w := \begin{pmatrix}
  1 &  \\
  \ell & 1
\end{pmatrix} z$.
It suffices to show for each $j \in \mathbb{Z}/p \mathbb{Z}$ that
\[
\left( \frac{\ell z + 1}{|\ell z + 1|} \right)^{-1/2} h (\frac{w
  + p j}{p^2}) = h_\ell (\frac{z + p j}{p^2}).
\]
We may assume that $j$ is represented
by a negative integer divisible by $4$.
The conclusion follows then
in a straightforward manner from the second assertion
of \cite[Lem 3.4]{MR0332663}
with $N := 4, M := 1, K := 4$,
and
\[
\begin{pmatrix}
  a & b \\
  c & d
\end{pmatrix}
:=
\begin{pmatrix}
  p^{-1} & j \\
  & p
\end{pmatrix}
\begin{pmatrix}
  1 &  \\
  \ell & 1
\end{pmatrix}
\begin{pmatrix}
  p^{-1} & j \\
  & p
\end{pmatrix}^{-1}
=
\begin{pmatrix}
  1  + j p \ell &  - j^2 \ell \\
  p^2 \ell & 1 - j p \ell
\end{pmatrix}.
\]
\end{proof}

\subsection{Explicit Shimizu lifts}
\label{sec-3-5}
We make use of the following explicit form
of the Shimizu correspondence.
\begin{lemma*}
  Let $\varphi \in \mathcal{F}(p)$.
  Then for $w_1,w_2,z \in \mathbb{H}$,
  \begin{equation}
    \overline{\varphi(w_1)} \varphi(w_2)
    = \int_{z \in \Gamma_0(p) \backslash \mathbb{H}}
    \theta(R(p);w_1,w_2,z)
    \varphi(z)
    \, d \mu(z).
  \end{equation}
\end{lemma*}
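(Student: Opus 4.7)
The plan is to establish this as an instance of the classical Shimizu correspondence for the split quaternion algebra. The quadratic space $(M_2(\mathbb{Q}), \det)$ has signature $(2,2)$, and the action $(g_1, g_2) \cdot \alpha = g_1 \alpha g_2^{-1}$ of $\GL_2 \times \GL_2$ realizes, modulo center, the identity component of its similitude group. The theta kernel $\theta(R(p); w_1, w_2, z)$ is thus the Weil-representation kernel for the associated seesaw pair $(\GL_2 \times \GL_2, \GL_2)$, with the lattice $R(p)$ selected so that the resulting congruence level in the $z$-variable is $p$.

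First I would verify that $z \mapsto \theta(R(p); w_1, w_2, z)$ is modular of weight $0$ on $\Gamma_0(p)$; this is a standard consequence of the Shintani-type transformation rules recalled in \S\ref{sec-3-2-3}, combined with the fact that $R(p)$ has discriminant $p^2$. Hence the integral defines a function $F(w_1, w_2)$ which is itself modular of weight $0$ on $\Gamma_0(p)$ in each argument. Next, a Hecke-equivariance argument would identify $F$ as a scalar multiple of $\overline{\varphi(w_1)} \varphi(w_2)$: for $n$ coprime to $p$, the partial sum $\sum_{\alpha \in R(p),\, \det \alpha = n} \phi_{w_1, w_2, z}(\alpha)$ is simultaneously the $n$-th Hecke correspondence applied in the $z$-variable to the determinant-$1$ stratum of the kernel, and the analogous Hecke correspondence applied in $w_1$ (or $w_2$). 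Integrating against $\varphi$ thus produces a function with Hecke eigenvalue $\lambda_\varphi(n)$ in each of $w_1, w_2$, and strong multiplicity one for newforms on $\Gamma_0(p)$ forces $F(w_1, w_2) = c \cdot \overline{\varphi(w_1)} \varphi(w_2)$ for some constant $c$ independent of $w_1, w_2$. The role of the complex conjugation in the left argument comes from the positivity of $P$ on the split space together with the sign convention for the Weil representation on $(2,2)$-signature.

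The chief obstacle is pinning down $c = 1$. This hinges on a careful audit of three compatible normalizations: the prefactor $\frac{1}{2\pi} y$ in the Schwartz function $\phi_{w_1, w_2, z}$; the normalized Petersson inner product with its $1/[\PSL_2(\mathbb{Z}) : \overline{\Gamma}]$ weighting; and the arithmetic normalization $\lambda_\varphi(1) = 1$ of $\varphi$. I would check $c = 1$ either by computing the leading $(n_1, n_2) = (1, 1)$ Fourier coefficient in $(w_1, w_2)$ of both sides, reducing to an explicit Gaussian integration over elements of $R(p)$ of determinant one (i.e., elements of $\Gamma_0(p)$, up to scaling in the $\alpha = m + \beta$ decomposition), or alternatively by translating the analogous explicit Shimizu formula from \cite{watson-2008} or \cite{MR3356036} into the present conventions. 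In either route, the computation is mechanical but bookkeeping-heavy, and is the only place where the specific integer constants in the statement get fixed.
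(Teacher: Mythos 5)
Your approach is a genuine reconstruction from first principles, whereas the paper's proof is a one-line citation: it invokes \cite[Thm 5.2]{MR3356036} (the author's earlier holomorphic Shimizu formula) and observes that running that proof with ``$k := 0$'' gives the Maass-form statement verbatim. Your route -- verify weight-$0$ modularity of the kernel in $z$ on $\Gamma_0(p)$, use the Eichler--Shimizu Hecke-equivariance of the theta kernel to show the integral inherits the Hecke eigenvalue system of $\varphi$ away from $p$, then invoke multiplicity one for the newform of exact level $p$ to conclude the integral is $c\cdot\overline{\varphi(w_1)}\varphi(w_2)$ -- is correct in outline and arguably more illuminating, since it makes visible exactly which structural facts force the identity. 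But the heavy lifting in this lemma is precisely the normalization $c=1$: that is the entire content distinguishing it from a soft uniqueness statement, and it is where the specific choices (the $\tfrac{1}{2\pi}y$ prefactor in $\phi_{w_1,w_2,z}$, the arithmetic normalization $\lambda_\varphi(1)=1$, and the index-normalized Petersson pairing) all interlock. You flag this as ``mechanical but bookkeeping-heavy'' and defer it to either a leading-Fourier-coefficient computation or a translation of the formula in \cite{watson-2008} or \cite{MR3356036} -- the latter being essentially the paper's own proof. So your blueprint is sound and takes a more self-contained path through the structural part, but the step that the lemma actually exists to record (the exact constant) is left as a plan rather than executed; as written, your argument proves the identity only up to an undetermined nonzero scalar.
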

\begin{proof}
  \cite[Thm 5.2]{MR3356036} implies an analogous
  assertion for holomorphic forms.  Running through the proof of
  that theorem with ``$k := 0$'', we obtain the identity stated
  here.
\end{proof}

\section{The $L^2$-norm of $h^\sharp$}
\label{sec-4}
In this section we establish \eqref{eqn:l-2-bound-for-h-sharp}.
We open
$\langle h^\sharp, h^\sharp \rangle$
as a double sum over $j_1, j_2$.
The diagonal $j_1 = j_2$ contributes $\langle h,h \rangle$;
our task is thus to show that
\begin{equation}\label{eq:sweet-cancellation-task-1}
  \sum _{\substack{
      j_1, j_2 \in \mathbb{Z} / p  : \\
      j_1 \neq j_2
    }
  }
  \langle
  h (\frac{z + p j_1 }{p^2}),
  h (\frac{z + p j_2 }{p^2})
  \rangle = 0,
\end{equation}
where here and henceforth
we mildly abuse notation
by writing (e.g.) $h (\frac{z + p j_i }{p^2})$
for the function
$z \mapsto h (\frac{z + p j_i }{p^2})$.
By a simple change of variables,
our task \eqref{eq:sweet-cancellation-task-1}
reduces to verifying that
\begin{equation}\label{eq:sweet-cancellation-task-2}
  \sum_{j=1}^{p-1}
  \langle h(z),
  h(z + j/p) \rangle
  = 0.
\end{equation}

To that end, temporarily fix $j \in \{1,\dotsc,p-1\}$.
Choose
$\gamma =
\begin{pmatrix}
  a & \ast \\
  p c & d
\end{pmatrix} \in \Gamma_0(4)$
with $c j \equiv -1 \pod{p}$, $c > 0$
and
$d \equiv 1 \pod{4 p}$,
so that also $a \equiv 1 \pod{4 p}$.
Set
\[n(j/p) := \begin{pmatrix}
  1 & j/p \\
  & 1
\end{pmatrix},
\quad 
t(p^{-1})
:=
\begin{pmatrix}
  p^{-1} &  \\
  & p
\end{pmatrix}
\]
and
$\delta := n(j/p) \gamma t(p^{-1})$.
By direct calculation,
we see that
$\delta$ belongs to $\Gamma_0(4)$ and has the form $\begin{pmatrix}
  \ast & \ast \\
  c & p d
\end{pmatrix}$.
By the invariance of the Petersson inner product and the modularity of $h$, we deduce that
\begin{align*}
  \langle h(z), h(z + j/p) \rangle
  &=
    \langle h(z), h(n(j/p) z) \rangle
  \\
  &=
    \langle h(\gamma z), h(n(j/p) \gamma z) \rangle
  \\
  &=
    \langle h(\gamma z), h(\delta p^{2} z) \rangle
  \\
  &=
    \eta(j)
    \langle h(z), h(p^{2} z) \rangle
\end{align*}
where
\begin{align*}
  \eta(j) &:=
  \exp(\tfrac{i}{2}
  (
  \underbrace{\arg( p c z + d) - \arg (c p^2 z + p d)}_{=0}
  ))
  \eps(\gamma) \overline{\eps(\delta)} \\
          &=
            \eps_{d}^{-1} \eps_{p d}
\qr{pc}{d}
\qr{c}{p d}
=
\eps_p
\qr{-j}{p}.
\end{align*}
In the final step we invoked our assumptions on $c,d$
and the rules of \cite[p442]{MR0332663}.
Thus $(\mathbb{Z}/p \mathbb{Z})^\times \ni j \mapsto \eta(j)$ defines a
constant multiple of the nontrivial quadratic character, hence
its sum over $j=1,\dotsc,p-1$ vanishes.
This completes the proof of
\eqref{eq:sweet-cancellation-task-2},
hence of
\eqref{eq:sweet-cancellation-task-1},
hence of \eqref{eqn:l-2-bound-for-h-sharp}.
\begin{remark*}
  The ``trivial bound'' for
  $\langle h^\sharp, h^\sharp \rangle$, ignoring the
  cancellation derived above from the oscillation of the
  half-integral weight automorphy factor,
  is $O(p^{\vartheta})$.
\end{remark*}

To illustrate the surprising power of
\eqref{eqn:l-2-bound-for-h-sharp}, we now prove Theorem
\ref{thm:quad}.
Define $V_0 : \mathbb{R}_{>0} \rightarrow \mathbb{R}_{\geq 0}$
by
\[
V_0(u)
:=
\int_{y = u}^{\infty}
|W(y)|^2 \, \frac{d y}{y^2}.
\]
By the asymptotic expansion of $W$ near $0$, there is $u_0 \in (0,1/e)$
so that $V_0(u) \asymp \log(1/u)$ for $|u| < u_0$.
Let $C \geq 1$.
By \eqref{eqn:l-2-bound-for-h-sharp},
the final assertion of \S\ref{sec-3-4-4}, Parseval
and the change of variables $y \mapsto p y / n$, we obtain
\begin{align*}
  1
  &\gg
    \|h^\sharp\|^2 \\
  &\gg
    \frac{1}{C p}
    \int_{x=0}^p
    \int_{y=u_0/C}^{\infty}
    |h^\sharp(x+iy)|^2 \, \frac{d x \, d y}{y^2}
  \\
  &=
    \frac{1}{C}
    \sum_n
    \frac{|b(p n)|^2}{|n|}
    \int_{y = u_0/C}^{\infty}
    |
    W(\frac{n y}{p})|^2
    \, \frac{d y}{y^2}
  \\
  &=
    \frac{1}{C p}
    \sum_{n}
    |b(p n)|^2
    V_0 (u_0 \frac{n}{C p})
  \\
  &\gg
    \frac{1}{C p}
    \sum_{n:|n| \leq C p}
    |b(p n)|^2
    \log
    (u_0^{-1} \frac{C p}{n})
  \\
  &\geq
    \frac{1}{C p}
    \sum_{n:|n| \leq C p}
    |b(p n)|^2
    (1 + 
    \log(\frac{C p}{n})).
\end{align*}
We conclude by the $L$-value formula
\eqref{eq:b-d-in-terms-of-L}
for $|b(d)|^2$.

\section{A triple product identity}
\label{sec-5}
In this section we prove \eqref{eq:period-formula}
after developing some preliminaries.
\subsection{Linear algebra lemma}
\label{sec-5-1}\label{sec:lin-alg-ident}
In this subsection we
employ
the temporary notation
$G := \SL_2(\mathbb{F}_p)$,
$M = M_2(\mathbb{F}_p)$.
The group $G$ acts on the $\mathbb{F}_p$-vector space
$M$ by conjugation.
Let $L := \left\{ \begin{pmatrix}
    0 & \ast \\
    0 & 0
  \end{pmatrix} \right\}$ denote the strictly upper-triangular
subspace of $M$.
Let $B$ denote the upper-triangular subgroup
of $G$.
Intrinsically, $B$ is the normalizer in $G$ of $L$.
We note, for future reference,
that $G$, $B$ and $L$ are the images under reduction modulo $p$
of $\SL_2(\mathbb{Z})$, $\Gamma_0(p)$ and $p S(1/p)$, respectively.
We require the following calculation:
\begin{lemma*}
  For $x \in M$,
  we have
  \begin{equation}\label{eqn:linear-algebra-lemma}
      \# \{g \in G/B : x \in g L g^{-1}\}
  =
  \begin{cases}
    p+1 & \text{if } x = 0, \\
    1 & \text{if } x \neq 0, \tr(x) = \det(x) = 0 \\
    0 & \text{otherwise.}
  \end{cases}
  \end{equation}
\end{lemma*}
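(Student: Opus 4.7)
\smallskip

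\noindent\emph{Proof proposal.} The plan is to reinterpret the quantity on the left of \eqref{eqn:linear-algebra-lemma} as counting the number of ``nilpotent lines'' in $M$ passing through $x$, where by a \emph{nilpotent line} I mean a one-dimensional $\mathbb{F}_p$-subspace of $M$ consisting entirely of nilpotent matrices. Every such line has the form $g L g^{-1}$ for some $g \in G$, because for $x \in M$ of $2 \times 2$ shape, the Cayley--Hamilton identity gives $x^2 - \tr(x) x + \det(x) = 0$, so the condition that $x$ be nilpotent coincides with $\tr(x) = \det(x) = 0$, and a standard computation shows that any nonzero nilpotent matrix is $\SL_2(\mathbb{F}_p)$-conjugate to a scalar multiple of $\bigl(\begin{smallmatrix} 0 & 1 \\ 0 & 0 \end{smallmatrix}\bigr)$, hence the line it spans is conjugate to $L$.

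Next I would verify that the map $G/B \to \{\text{nilpotent lines in } M\}$ sending $gB$ to $gLg^{-1}$ is a bijection. Well-definedness and injectivity amount to the assertion that the stabilizer in $G$ (under conjugation) of the line $L$ equals $B$; this is the statement identified in the excerpt that $B$ is the normalizer of $L$. Surjectivity follows from the transitivity just noted. As a consistency check, both sides have cardinality $p+1$: the right side because nilpotent lines are parametrized by the $\mathbb{F}_p$-points of the smooth projective conic $\{a^2 + bc = 0\} \subset \mathbb{P}^2$ in the coordinates $\bigl(\begin{smallmatrix} a & b \\ c & -a \end{smallmatrix}\bigr)$ of traceless matrices.

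With this bijection in hand, the lemma reduces to a trichotomy on $x$. If $x = 0$, then $0$ lies in every line, so all $p+1$ cosets contribute. If $x \neq 0$ satisfies $\tr(x) = \det(x) = 0$, then $x$ is a nonzero nilpotent, so it spans a unique nilpotent line $\mathbb{F}_p x$, which corresponds under the bijection to a unique coset $gB$. If $x$ is not nilpotent, then $x$ cannot lie in any $g L g^{-1}$ since the latter consists only of nilpotents, and the count is zero. This matches \eqref{eqn:linear-algebra-lemma} in all three cases.

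The only step that requires care is the surjectivity of the parametrization, or equivalently the transitivity of the $\SL_2(\mathbb{F}_p)$-conjugation action on nilpotent lines (as opposed to the easier action on nonzero nilpotent matrices, where it is only $\GL_2$ that acts transitively). I expect this to be the main mild obstacle; it is handled by noting that for any $\lambda \in \mathbb{F}_p^\times$, the two matrices $\bigl(\begin{smallmatrix} 0 & 1 \\ 0 & 0 \end{smallmatrix}\bigr)$ and $\lambda \bigl(\begin{smallmatrix} 0 & 1 \\ 0 & 0 \end{smallmatrix}\bigr)$ are $\SL_2$-conjugate via $\diag(\lambda, 1) \cdot \diag(1, \lambda^{-1}) = \diag(\lambda, \lambda^{-1}) \in \SL_2$, so the scaling ambiguity is invisible at the level of lines.
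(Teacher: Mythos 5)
Your proof is correct, but it organizes the count differently from the paper. The paper parametrizes $G/B$ by the lines in $\mathbb{F}_p^2$ (i.e., $\mathbb{P}^1(\mathbb{F}_p)$) via $gB \mapsto g\ell$ with $\ell = \mathbb{F}_p e_1$, observes that $gLg^{-1}$ is exactly the set of nilpotents annihilating $g\ell$, and so the left side of \eqref{eqn:linear-algebra-lemma} becomes ``the number of lines in $\mathbb{F}_p^2$ that $x$ annihilates'' (when $x$ is nilpotent; $0$ otherwise). You instead parametrize $G/B$ by the nilpotent lines inside $M$ via $gB \mapsto gLg^{-1}$ and count how many such lines pass through $x$. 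These are dual incarnations of the same finite set of size $p+1$ (the map sending a line $\ell' \subset \mathbb{F}_p^2$ to the set of nilpotents annihilating $\ell'$ is a bijection onto the conic of nilpotent lines), and each leads to the same trichotomy. The paper's choice is marginally slicker in that it avoids having to separately establish the bijection with the nilpotent cone: once one knows $g \mapsto g\ell$ identifies $G/B$ with $\mathbb{P}^1$, the rest is a one-line observation about kernels of nilpotent $2\times 2$ matrices. Your route requires checking surjectivity of $gB \mapsto gLg^{-1}$ as an extra step, but the cardinality match with the conic $\{a^2 + bc = 0\}$ is a nice consistency check that the paper does not make.

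One small technical slip in your final paragraph: the matrices $\bigl(\begin{smallmatrix} 0 & 1 \\ 0 & 0 \end{smallmatrix}\bigr)$ and $\lambda\bigl(\begin{smallmatrix} 0 & 1 \\ 0 & 0 \end{smallmatrix}\bigr)$ are \emph{not} $\SL_2(\mathbb{F}_p)$-conjugate for all $\lambda \in \mathbb{F}_p^\times$; conjugation by $\diag(\lambda,\lambda^{-1})$ sends $\bigl(\begin{smallmatrix} 0 & 1 \\ 0 & 0 \end{smallmatrix}\bigr)$ to $\lambda^2 \bigl(\begin{smallmatrix} 0 & 1 \\ 0 & 0 \end{smallmatrix}\bigr)$, so only the scalars in $(\mathbb{F}_p^\times)^2$ are reachable (for odd $p$ there are two $\SL_2$-orbits of nonzero nilpotents). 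This does no damage to your argument, since the two matrices span the same line $L$ without any conjugation needed, and your first paragraph already contains a correct proof of surjectivity; but the specific justification offered at the end is false and should be deleted or replaced with the simple observation that scalars do not change the line spanned.
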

\begin{proof}
  Let $e_1 = \begin{pmatrix}
    1  \\
    0
  \end{pmatrix}$ and $e_2 = \begin{pmatrix}
    0  \\
    1
  \end{pmatrix}$ denote the standard basis elements for the
  two-dimensional $\mathbb{F}_p$-vector space $\mathbb{F}_p^2$.
  The subspace $L$ of $M$ consists of those nilpotent elements
  $x \in M$ that annihilate the line $\ell := \mathbb{F}_p e_1$
  spanned by $e_1$.  The conjugate subspace $g L g^{-1}$ thus
  consists of the nilpotent elements of $M$ that annihilate the
  translated line $g \ell$.  As $g$ traverses $G/B$, the
  translated line $g \ell$ traverses the set of all lines in
  $\mathbb{F}_p^2$.  Thus the LHS of
  \eqref{eqn:linear-algebra-lemma} vanishes unless $x$ is
  nilpotent, in which case it is the number of lines that $x$
  annihilates.

  In verifying \eqref{eqn:linear-algebra-lemma}, we may and
  shall assume that $x$ is nilpotent, since otherwise both sides
  vanish.
  The claimed identity is then an immediate
  consequence of the following observations:
  \begin{itemize}
  \item The zero element annihilates
    all $p+1$ lines in
    $\mathbb{F}_p^2$.
  \item Any nilpotent element has nontrivial kernel, hence
    annihilates some line.
  \item No nonzero nilpotent element annihilates two distinct
    lines.
  \end{itemize}
\end{proof}

\subsection{Pushforward of the theta kernel}
\label{sec-5-2}\label{sec:theta-kernel-pushforward}
For $w,z \in \mathbb{H}$, set
\[
\theta^\sharp(w,z)
:=
\sum_{\gamma \in \Gamma_0(p) \backslash \SL_2(\mathbb{Z})}
\theta(S(1/p), \gamma w, \gamma w, z).
\]
\begin{lemma*}
  We have
  $\theta^\sharp(w,z)
  = p \theta(z)
  \left( \frac{1}{p^{1/2}}
    \sum_{j \in \mathbb{Z}/p \mathbb{Z}}
    \theta (w, \tfrac{z + p j}{p^2})
    + \theta(w,z)
  \right)$.
\end{lemma*}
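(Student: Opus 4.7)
The plan is to combine a factorization of the theta kernel at the archimedean place with an orbit–counting argument based on the lemma of \S\ref{sec:lin-alg-ident}. Using \eqref{eq:factorization-schwartz-at-infinity} and the decomposition \eqref{eq:decomp-S-Z-S0} applied to $S(1/p) = \mathbb{Z} \oplus S^0(1/p)$, one splits off the scalar contribution:
\[
\theta(S(1/p); \gamma w, \gamma w, z) = \theta(z) \sum_{\beta \in S^0(1/p)} \phi_{\gamma w, z}^0(\beta).
\]
Since $\sigma_{\gamma w} = \gamma \sigma_w k$ for some $k \in \SO(2)$, and since $P$ and $\det$ are invariant under $\SO(2)$-conjugation, one has $\phi_{\gamma w, z}^0(\beta) = \phi_{w, z}^0(\gamma^{-1} \beta \gamma)$. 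It therefore suffices to prove
\[
T(w, z) \;:=\; \sum_{\gamma \in \Gamma_0(p) \backslash \SL_2(\mathbb{Z})} \, \sum_{\alpha \in \gamma^{-1} S^0(1/p) \gamma} \phi_{w, z}^0(\alpha) \;=\; p \, \theta(w, z) + p^{1/2} \sum_{j \in \mathbb{Z}/p\mathbb{Z}} \theta\bigl(w, \tfrac{z + p j}{p^2}\bigr).
\]

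Next, swap the order of summation in $T$ and evaluate the multiplicity of each $\alpha$ via the lemma of \S\ref{sec:lin-alg-ident}. Since $p S^0(1/p) \subseteq S^0$ and $\SL_2(\mathbb{Z})$ preserves $S^0$ by conjugation, every $\alpha$ that contributes lies in $\tfrac{1}{p} S^0$. Writing $\alpha = \alpha'/p$ with $\alpha' \in S^0$, the condition $\gamma \alpha \gamma^{-1} \in S^0(1/p)$ becomes $\gamma \alpha' \gamma^{-1} \in p S^0(1/p)$, and, both sides lying in $S^0$, this is equivalent to the mod-$p$ congruence $\gamma \alpha' \gamma^{-1} \bmod p \in L$ inside $S^0 / p S^0 \cong M_2(\mathbb{F}_p)^0$ (a direct check, using that $2$ is a unit mod $p$, shows that $p S^0(1/p)$ reduces to $L$). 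Via the bijections $\Gamma_0(p) \backslash \SL_2(\mathbb{Z}) \cong B \backslash G \cong G/B$ induced by reduction mod $p$ and by inversion, the lemma then yields multiplicity $p+1$ when $\overline{\alpha'} = 0$ (i.e., $\alpha \in S^0$), multiplicity $1$ when $\overline{\alpha'} \neq 0$ with $\det(\alpha') \equiv 0 \pmod p$ (the trace condition being automatic), and zero otherwise. This gives
\[
T(w, z) = (p+1) \, \theta(w, z) + \Sigma, \qquad \Sigma := \sum_{\substack{\alpha \in \tfrac{1}{p} S^0 \setminus S^0 \\ p \det(\alpha) \in \mathbb{Z}}} \phi_{w, z}^0(\alpha).
\]

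Finally, I would identify $\Sigma$ with the Hecke-like sum on the right-hand side. Substituting $\alpha = \tilde\alpha/p$ (with $\tilde\alpha \in S^0$) in the definition of $\theta(w, (z + p j)/p^2)$ and using the homogeneities $P(X/p) = p^{-2} P(X)$ and $\det(X/p) = p^{-2} \det(X)$ produces
\[
\theta\bigl(w, \tfrac{z + p j}{p^2}\bigr) \;=\; p^{-3/2} \sum_{\alpha \in \tfrac{1}{p} S^0} \phi_{w, z}^0(\alpha) \, e\bigl(p j \det(\alpha)\bigr).
\]
Summing over $j \in \mathbb{Z}/p\mathbb{Z}$ and using orthogonality (the inner character sum equals $p$ precisely when $p \det(\alpha) \in \mathbb{Z}$ and vanishes otherwise, noting that $p^2 \det(\alpha) \in \mathbb{Z}$ is automatic from $\alpha \in \tfrac{1}{p} S^0$) yields $p^{1/2} \sum_j \theta(w, (z + p j)/p^2) = \theta(w, z) + \Sigma$. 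Combining with the previous display gives the claimed identity. The main technical point is the mod-$p$ bookkeeping in the second step: verifying that $p S^0(1/p)$ reduces to the subspace $L$ inside $S^0 / p S^0$ and that the coset bijection correctly converts the conjugation condition into the hypothesis of the lemma.
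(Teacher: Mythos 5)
Your proof is correct and follows essentially the same approach as the paper: both reduce the orbit count to the lemma of \S\ref{sec:lin-alg-ident} applied to the reduction of $p\alpha$ mod $p$, and both identify the non-old part with $p^{1/2}\sum_j \theta(w, (z+pj)/p^2)$ by rescaling $\alpha \mapsto \alpha/p$ and applying orthogonality of $j \mapsto e(pj\det\alpha)$. The only difference is cosmetic: you split off the scalar factor $\theta(z)$ via $S(1/p) = \mathbb{Z} \oplus S^0(1/p)$ at the outset and then run the orbit count on the traceless sublattice, whereas the paper runs the orbit count on $S(1/p)$ (so the lemma's trace condition is genuinely used) and factors out $\theta(z)$ at the end.
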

\begin{proof}
  Abbreviate $\phi := \phi_{w,w,z}, \phi^0 := \phi_{w,z}^0$.
  By definition,
  \[
  \theta^\sharp(w,z)
  = \sum_{\gamma \in \Gamma_0(p) \backslash \SL_2(\mathbb{Z})}
  \sum_{\alpha \in \gamma^{-1} S (1/p) \gamma}
  \phi(\alpha),
  \quad
  \theta(w,z)
  = \sum_{\beta \in S^0}
  \phi^0(\beta).
  \]
  For each $\gamma \in \SL_2(\mathbb{Z})$,
  we have $S \subseteq \gamma^{-1} S (1/p ) \gamma \subseteq p^{-1}
  S$.
  Conversely, let $\alpha \in p^{-1} S$.
  By applying the lemma of \S\ref{sec:lin-alg-ident}
  to the image of $p \alpha$ under reduction modulo $p$,
  we see that
  $\alpha$ belongs to $\gamma^{-1} S (1/p) \gamma$
  for some $\gamma \in \SL_2(\mathbb{Z})$
  if and only if $\trace(\alpha) \in \mathbb{Z}$
  and $\det(\alpha) \in p^{-1} \mathbb{Z}$,
  and in that case,
  \[
  \# \{
  \gamma \in \Gamma_0(p) \backslash \SL_2(\mathbb{Z})
  :
  \alpha \in \gamma^{-1} S(1/p) \gamma 
  \}
  =
  \begin{cases}
    1 &  \text{if } \alpha \notin S,
    \\
    p+1 & \text{if } \alpha \in S.
  \end{cases}
  \]
  Thus
  \[
  \theta^\sharp(w, z)
  =
  \sum_{
    \substack{
      \alpha \in p^{-1} S : \\
      \tr(\alpha) \in \mathbb{Z}, \\
      \det(\alpha) \in p^{-1} \mathbb{Z}
    }
  }
  \phi(\alpha)
  + p
  \sum_{\alpha \in S}
  \phi(\alpha).
  \]
  Using the decomposition \eqref{eq:decomp-S-Z-S0}
  and the identity \eqref{eq:factorization-schwartz-at-infinity},
  we obtain
  \[
  \sum_{\alpha \in S}
  \phi(\alpha)
  =
  \sum_{m \in \mathbb{Z}, \beta \in S^0}
  y^{1/4} e(m^2 z)
  \phi^0(\beta)
  =
  \theta(z) \theta(w,z),
  \]
  \[
  \sum_
  {
    \substack{
      \alpha \in p^{-1} S : \\
      \tr(\alpha) \in \mathbb{Z}, \\
      \det(\alpha) \in p^{-1} \mathbb{Z}
    }
  }
  \phi(\alpha)
  =
  \theta(z)
  \sum _{\substack{
      \beta \in p^{-1} S^0 :  \\
      \det(\beta) \in p^{-1} \mathbb{Z} 
    }
  }
  \phi^0(\beta),
  \]
  \[
  \sum _{\substack{
      \beta \in p^{-1} S^0 :  \\
      \det(\beta) \in p^{-1} \mathbb{Z} 
    }
  }
  \phi^0(\beta)
  =
  \sum_{\substack{
      \beta \in S^0 :  \\
      \det(\beta) \in p \mathbb{Z} 
    }
  }
  \phi^0(p^{-1} \beta)
  =
  p^{1/2}
  \sum_{j \in \mathbb{Z}/p \mathbb{Z}}
  \theta(w, \frac{z + p j }{p^2}).
  \]
  These identities combine to yield the required identity.
\end{proof}

\subsection{Period identities\label{sec:pf-thm-period-formula}}
\label{sec-5-3}
We now prove \eqref{eq:period-formula}.
Note first that we may explicitly pushforward
the $L^2$-mass of $\varphi$
down to $\SL_2(\mathbb{Z}) \backslash \mathbb{H}$
before we integrate it against $\Psi$:
\begin{equation}\label{eq:we-can-pushforward}
  \langle \Psi \varphi, \varphi  \rangle
  =
  \frac{1}{p+1}
  \int_{w \in \SL_2(\mathbb{Z}) \backslash \mathbb{H}}
  \Psi(w)
  \sum_{\gamma \in \Gamma_0(p) \backslash \SL_2(\mathbb{Z})}
  |\varphi|^2(\gamma w)
  \, d \mu(w).
\end{equation}
This identity motivates finding a formula
for the inner sum over $\gamma$.
To that end,
we begin by applying
the substitution
$z \mapsto -1/z$
to the integral representation for $\overline{\varphi}(w_1)
\varphi(w_2)$
given by the lemma of \S\ref{sec-3-5}.
Using
the formulas
\eqref{eq:fricke-involution-applied-to-phi}
and
\eqref{eq:aut-rels-for-theta-4}
describing the behavior of
the integrand under $z \mapsto -1/z$,
we obtain
\begin{equation}\label{eqn:identity-phi-w1-w2-R-one-over-p}
  \overline{\varphi(w_1)} \varphi(w_2)
= \frac{\pm  1}{p}
\int_{z \in \Gamma_0(1/{p}) \backslash \mathbb{H}}
\theta(R(1/p); w_1,w_2,z)
\varphi(\tfrac{z}{p})
\, d \mu(z).
\end{equation}
Next,
we observe that
$R(1/p) = \{\alpha/2 : \alpha \in S(1/p), \nr(\alpha) \equiv 0
\pod{4}\}$.
From this,
the identity
\[
  \phi_{w_1,w_2,(z+ p j)/4}(\alpha) = (1/4) e(p j
  \det(\alpha)/4) \phi_{w_1,w_2,z}(\alpha/2)
\]
and
finite Fourier inversion,
we deduce that
\begin{equation}\label{eq:}
  \theta(R(1/p); w_1, w_2, z)
  =
  \sum_{j \in \mathbb{Z}/4 \mathbb{Z}}
  \theta(S(1/p); w_1, w_2,
  \tfrac{z + p j}{4}).
\end{equation}
We choose a congruence subgroup $\Gamma$ of $\Gamma_0(1/p)$
that contains $\{\pm 1\}$
but is otherwise small enough that each integral
displayed below is well-defined.
Since
$\varphi(\tfrac{z - p j}{p}) = \varphi(\tfrac{z}{p})$, the
substitution $z \mapsto z - p j$ followed by $z \mapsto 4 z$
gives
\begin{align*}
  &
    \int_{z \in \Gamma \backslash \mathbb{H}}
    \theta(S(1/p); w_1, w_2,
    \tfrac{z + p j}{4})
    \varphi(\tfrac{z}{p})
    \, d \mu(z)
  \\
  &\quad
    =
    \int_{z \in \Gamma \backslash \mathbb{H}}
    \theta(S(1/p); w_1, w_2,
    \tfrac{z}{4})
    \varphi(\tfrac{z}{p})
    \, d \mu(z)
  \\
  &\quad
    =
    \int_{z \in \Gamma \backslash \mathbb{H}}
    \theta(S(1/p); w_1, w_2, z)
    \varphi(\tfrac{4 z}{p})
    \, d \mu(z)
\end{align*}
We introduce the shorthand
$\mathbb{E}_{z \in \Gamma \backslash \mathbb{H}} (\dotsb)$
for $[\SL_2(\mathbb{Z}) : \Gamma]^{-1} \int_{z \in \Gamma \backslash
  \mathbb{H}}
(\dotsb)$.
Using the above computations,
the identity $[\SL_2(\mathbb{Z}) : \Gamma_0(1/p)] = p+1$
and the formula \eqref{eqn:identity-phi-w1-w2-R-one-over-p},
we deduce that
\begin{equation}\label{eq:final-general-formula-varphi-w1-w2}
  \overline{\varphi(w_1)} \varphi(w_2)
  =
  \frac{\pm 4 (p+1)}{p}
  \mathbb{E} _{z \in \Gamma \backslash \mathbb{H}}
  \theta(S(1/p); w_1, w_2, z)
  \varphi(\tfrac{4 z}{p})
  \, d \mu(z).
\end{equation}
Setting $w_1 = w_2 =: w$
in \eqref{eq:final-general-formula-varphi-w1-w2}
gives
\[
\sum_{\gamma \in \Gamma_0(p) \backslash \SL_2(\mathbb{Z})}
|\varphi|^2(\gamma w)
=
  \frac{\pm 4 (p+1)}{p}
\mathbb{E} _{z \in \Gamma \backslash \mathbb{H}}
\theta^\sharp(w,z)
\varphi(\tfrac{4 z}{p})
\, d \mu(z).
\]
Applying the lemma of \S\ref{sec:theta-kernel-pushforward}, we obtain
\begin{equation}\label{eq:formula-for-varphi-2-pushforwarded}
  \sum_{\gamma \in \Gamma_0(p) \backslash \SL_2(\mathbb{Z})}
  |\varphi|^2(\gamma w)
  =
  \pm 4 (p+1)
  \mathbb{E} _{z \in \Gamma \backslash \mathbb{H}}
  \theta(z)
  \frac{1}{p^{1/2}}
  \sum_{j \in \mathbb{Z}/p \mathbb{Z} }
  \theta(w, \tfrac{z + p j}{p^2})
  \varphi(\tfrac{4 z}{p})
  \, d \mu(z).
\end{equation}
(We have used here that $\theta(z) \theta(w, z)$ is old at $p$
and $\varphi$ is new at $p$ to discard the contribution
of the second term
in the
lemma of \S\ref{sec:theta-kernel-pushforward}.)
We note that
\begin{equation}\label{eq:h-sharp-as-theta-lift}
  \int_{w \in \SL_2(\mathbb{Z}) \backslash \mathbb{H}}
  \Psi(w)
  \frac{1}{p^{1/2}}
  \sum_{j \in \mathbb{Z}/p \mathbb{Z} }
  \theta(w, \tfrac{z + p j}{p^2}) \, d \mu(w)
  =
  \overline{h^\sharp(w)}.
\end{equation}
Combining \eqref{eq:we-can-pushforward} and \eqref{eq:h-sharp-as-theta-lift}
with
\eqref{eq:formula-for-varphi-2-pushforwarded}
integrated over $w$ against $\Psi(w)$
gives
\begin{align*}
  \langle \Psi \varphi, \varphi  \rangle
  &=
    \pm 4
    \mathbb{E}_{z \in \Gamma \backslash \mathbb{H}}
    \theta(z)
    \overline{h^\sharp(z)}
    \varphi(\tfrac{4 z}{p})
    \, d \mu(z)
  =
    \pm 4
    \langle \theta(z)
    \varphi(\tfrac{4 z}{p}),
    h^\sharp(z)
    \rangle,
\end{align*}
as required.

\section{The basic inequality}
\label{sec-6}
We now prove \eqref{eq:flexible-bound-theta-h-sharp}.
Set $\Gamma := \Gamma_0(4/p)$.
Define $F : \Gamma \backslash \mathbb{H} \rightarrow
\mathbb{R}_{\geq 0}$
by
$F(z) := \htt(z)^{1/2} | h^\sharp(z)|^2$.
By \eqref{eq:theta-height-1-4}
and the estimate $[\SL_2(\mathbb{Z}) : \Gamma ] \asymp p$,
our task  \eqref{eq:flexible-bound-theta-h-sharp}
reduces to showing for $T \geq 1$ that
\begin{equation}\label{eq:flexible-bound-for-F}
  \int_{\Gamma \backslash \mathbb{H}}
  F :=
  \int_{z \in \Gamma \backslash \mathbb{H}}
  F(z) \, d \mu(z)
  \ll
  p
  T^{1/2}
  +
  p^{1/2}
  R,
\end{equation}
where $R$ is as in \S\ref{sec:baisc-ineq-summary}
and the implied constant is uniform in $p,T$.
Using the fundamental domain from
\S\ref{sec:fund-domain-thin-part-Gamma},
we may write
$\int_{\Gamma \backslash \mathbb{H}} F
= I_0 + \sum_{\gamma \in \mathcal{C}} I(\gamma)$,
where
\[
I_0 
:=
\int_{z \in \Gamma \backslash \mathbb{H} : \htt(z) \leq T}
F(z) \, d \mu(z)
= \int _{
  \substack{
    z \in \Gamma \backslash \mathbb{H} : \\
    \htt(z) \leq T
  }
}
\htt(z)^{1/2}
| h^\sharp(z) |^2
\, d \mu(z),
\]
\begin{align*}
I(\gamma) &:=
\int_{y=T}^{\infty} 
\int_{x = 0}^{w(\gamma)}
F(\gamma(x+iy))
\, 
\frac{d x \, d y}{y^{2}} \\
&=
\int_{y=T}^{\infty} 
y^{1/2}
\int_{x = 0}^{w(\gamma)}
|h^\sharp(\gamma(x+iy))|^2
\, 
\frac{d x \, d y}{y^{2}}.
\end{align*}

The adequate estimate $I_0 \ll p T^{1/2}$ follows as indicated in
\S\ref{sec:baisc-ineq-summary} from the estimate
$\|h^\sharp\|^2 = \|h\|^2 \ll 1$.

It remains to estimate $I(\gamma_i)$ for $i=1,\dotsc,6$.
We start with the most important case $i=1$.
Substituting the formula
\eqref{eq:defn-h-sharp} for $h^\sharp$ and appealing to
Parseval followed by the substitution
$y \mapsto p y/n$,
we obtain
\begin{align}\label{eq:formula-I-gamma-1-yay}
  I(\gamma_1)
  &=
    p
    \int_{y=T}^{\infty} 
    y^{1/2}
    \sum_n
    \frac{|b(p n)|^2}{|n|}
    | W (\frac{n}{p}y) |^2
    \, 
    \frac{d x \, d y}{y^{2}}
  \\ \label{eq:formula-V}
  &= 
    p^{1/2}
    \sum_n
    \frac{|b(p n)|^2}{|n|^{1/2}}
    V(\frac{n}{p/T}), 
    \quad
    V(u)
    :=
    \int_{y = u}^{\infty}
    y^{1/2}
    | W (y) |^2
    \, \frac{d y}{y^2}.
\end{align}
The estimate \eqref{eq:whittaker-fn-estimate} for $W$
implies that
$V(u) \ll \min(1, |u|^{-100})$,
which leads to the adequate estimate
$I(\gamma_1) \ll p^{1/2} R$.

Using \S\ref{sec:fourier-exp-h-sharp-other-cusps},
we may similarly estimate $I(\gamma_2), I(\gamma_3)$.

Since $w(\gamma_i) \leq 4 = O(1)$
for $i=4,5,6$,
the ``trivial bound''
$I(\gamma_i) \ll w(\gamma_i) \|h^\sharp\|_{\infty}^2 \ll p
w(\gamma_i)$
suffices for our purposes.

\section{A converse estimate}\label{sec:converse-estimate}
We finally prove Theorem \ref{thm:converse-estimate}.
The non-constant Fourier components of $\theta$
decay rapidly near the cusp, so we may find some fixed $y_0 \geq 1$ so that
\begin{equation}\label{eqn:}
  \theta(z) \gg y^{1/4} \text{ if } y = \Im(z) \geq y_0.
\end{equation}
Assuming \eqref{eq:lind-conj-level} or
\eqref{eq:lindelof-bound-for-triple-product-open} and arguing as
in \S\ref{sec-6}, we derive the lower bound
\begin{align}
  p^{1 + o(1)}
  &\gg
  p \|\theta h^\sharp\|^2
  \gg
  \int_{y=y_0}^{\infty} 
  y^{1/2}
  \int_{x = 0}^{p}
  |h^\sharp(\gamma(x+iy))|^2
  \, 
    \frac{d x \, d y}{y^{2}}
  \\
  &=
    p^{1/2}
    \sum _{n}
    \frac{|b(p n)|^2}{|n|^{1/2}}
    V (y_0 n/p),
\end{align}
with $V$ as defined in \eqref{eq:formula-V}.  Assuming that
$L(\Psi,\tfrac{1}{2}) \neq 0$, the form $h$ is nonzero, and its
Whittaker function $W$ is not identically zero on any interval.
In particular, $V(u) \gg 1$ for $u \leq y_0$.  The required
estimate \eqref{eqn:sparse-quadratic-twist-sum-divided-by-rt-n}
follows.

\subsection*{Acknowledgements}
We gratefully acknowledge the support of NSF grant OISE-1064866
and SNF grant SNF-137488 during the work leading to this paper.
Most of this article was written while the author was in
residence at the Mathematical Sciences Research Institute in
Berkeley, California, during the Spring 2017 semester, supported
by the National Science Foundation under Grant No. DMS-1440140.
We would like to thank the anonymous referee for helpful
feedback and corrections and to thank Valentin Blomer, Philippe
Michel, Etienne Le Masson, Maksym Radziwill, K. Soundararajan,
Raphael Steiner and Matthew Young for helpful feedback and
encouragement.

% -----------------
\def\cprime{$'$} \def\cprime{$'$} \def\cprime{$'$} \def\cprime{$'$}

% -----------------
\end{document}